\author{Xi Sisi Shen}
\address{Department of Mathematics\\
  Northwestern University\\
  Evanston, IL USA 60208}
\email[X. S. Shen]{xss@math.columbia.edu}
\newtheorem{thm}{Theorem}
\newtheorem{lemma}{Lemma}
\DeclareMathOperator\Ree{Re}
\DeclareMathOperator\Ent{Ent}
\DeclareMathOperator\Mab{Mab}
\DeclareMathOperator\tr{tr}
\DeclareMathOperator\Ric{Ric}
\DeclareMathOperator\R{R}
\DeclareMathOperator\BC{BC}
\begin{document}
\bibliographystyle{amsplain}

\title{A Chern-Calabi flow on Hermitian manifolds}

\begin{abstract}
We study an analogue of the Calabi flow in the non-K\"ahler setting for compact Hermitian manifolds with vanishing first Bott-Chern class. We prove a priori estimates for the evolving metric along the flow given a uniform bound on the Chern scalar curvature. If the Chern scalar curvature remains uniformly bounded for all time, we show that the flow converges smoothly to the unique Chern-Ricci-flat metric in the $\partial\bar{\partial}$-class of the initial metric.
\end{abstract}
\maketitle
\section{Introduction}
The Calabi flow was introduced by Calabi for K\"ahler metrics in \cite{calabi1,calabi2} and is defined by
\begin{align*}
\frac{\partial \omega}{\partial t}  = \sqrt{-1}\partial\bar{\partial} \R, \ \ \omega(0) = \omega_0,
\end{align*}
where $\R = \tr_\omega \Ric(\omega) = \tr_\omega \sqrt{-1}\partial\bar{\partial}\log\det g$ is the scalar curvature of a K\"ahler metric $g$, with associated K\"ahler form $\omega$, and the flow preserves the K\"ahler class of the metric. If we let $\omega_\varphi(t) = \omega_0 + \sqrt{-1}\partial\bar{\partial}\varphi(t)$ and normalize $\varphi\in C^\infty(X)$ such that $\int_X \varphi \omega_\varphi^n = 0$, then the flow can be represented in terms of the potential function $\varphi$ by
\begin{align*}
\frac{\partial \varphi}{\partial t}  = \R_\varphi - \underline{\R},
\end{align*}
where $\R_\varphi$ is the scalar curvature of $\omega_\varphi $ and $\underline{\R}=\frac{\int_X \R_\varphi \omega_\varphi^n}{\int_X \omega_\varphi^n}$ is the average scalar curvature of $\omega_\varphi$ on $X$ which is independent of $t$.
Short-time existence of the Calabi flow follows from the fact that it is a fourth order quasilinear parabolic equation as shown by Chen-He \cite{chen-he08}. They also proved the global existence of the flow under the assumption of a uniform Ricci curvature bound \cite{chen-he08}. It was shown by Sz\'ekelyhidi in \cite{szekelyhidi12} that if the curvature tensor is uniformly bounded along the Calabi flow and the Mabuchi energy is proper, then the flow converges to a constant scalar curvature K\"ahler (cscK) metric. Chen-Sun proved in \cite{chen-sun10} that if the Calabi flow starts at an initial metric that is sufficiently close to a cscK metric, then the flow exists and converges uniformly to the cscK metric. Additional literature on the Calabi flow can be found in \cite{bv20, bdl, chang, chrusciel, hf12, fine10, he15, hz12, lwz, streets14, streets16, tw07}. 

We define an analogue of the Calabi flow on the $\partial\bar{\partial}$-class of Hermitian metrics when the first Bott-Chern class vanishes. Let $(X,\omega_0)$ be a Hermitian manifold and let $g_0$ be the associated Hermitian metric to the real $(1,1)$-form $\omega_0$. The real $(1,1)$ Bott-Chern cohomology is defined as
\begin{align*}
H^{1,1}_{\BC}(X,\mathbb{R}) = \frac{\{\text{$d$-closed real (1,1)-forms}\}}{\{\sqrt{-1}\partial\bar{\partial}\psi, \ \psi\in C^\infty(X,\mathbb{R}) \}}
\end{align*} 
and the first Bott-Chern class, denoted $c_1^{\BC}(X)$, is given by the $\partial\bar{\partial}$-class of the Chern-Ricci form, $$\Ric(\omega)=-\sqrt{-1}\partial\bar{\partial}\log\det g,$$ for any metric $\omega$ on $X$. Now, let us define the space of metrics $$\mathcal{H} = \{\omega_\varphi : \omega_\varphi=\omega_0 + \sqrt{-1}\partial\bar{\partial}\varphi >0,  \varphi \in C^\infty(X,\mathbb{R})\}.$$  In the setting of vanishing first Bott-Chern class, $c_1^{\BC}(X)=0$, Tosatti-Weinkove \cite{tw13} observed that one can define the Mabuchi energy $\Mab_{\omega_0}(\omega_\varphi): \mathcal{H}\rightarrow \mathbb{R}$ by
\begin{align*}
\Mab_{\omega_0}(\omega_\varphi) = \frac{1}{V}\int_X \Big(\log\frac{\det g_\varphi}{\det g_0} - F \Big)\omega_\varphi^n + \frac{1}{V}\int_X F\omega_0^n,
\end{align*}
where $F$ is the Chern-Ricci potential of $\omega_0$, that is $\Ric(\omega_0) = \sqrt{-1}\partial\bar{\partial}F$, normalized so that $\int_X e^F\omega_0^n = \int_X \omega_0^n$. This definition of Mabuchi energy agrees with the formula in the K\"ahler setting \cite{tian2000} (see also Section 9 of \cite{tw13}). Similar to how there are several generalizations of the K\"ahler-Ricci flow to the non-K\"ahler setting \cite{gill11, streets_tian12,tw15, yury}, the Chern-Calabi flow we consider in this paper may not be the only generalization of the Calabi flow evolving the $\partial\bar{\partial}$-potential of the metric by its Chern scalar curvature. In \cite{bv20}, Bedulli-Vezzoni prove short-time existence of a Calabi-type flow evolving the potential function by its Chern scalar curvature within the $\partial\bar{\partial}$-class of $\omega^{n-1}$, see also \cite{bv17, kawamura19}. Other flows of Hermitian metrics in the non-K\"ahler setting have been studied in \cite{acs,bx,ppz}. 

Assume that our Hermitian metric $\omega_0$ satisfies $\partial\bar{\partial}\omega_0^k=0$ for $k=1,2$ and let $\omega_\varphi = \omega_0 + \sqrt{-1}\partial\bar{\partial}\varphi$ for a smooth function $\varphi$ normalized so that $\int_X \varphi \omega_\varphi^n=0$. We consider a gradient flow of the Mabuchi energy defined above, starting at $\omega_0$. This flow can be expressed in terms of the potential $\varphi$ by
\begin{align}
\frac{\partial \varphi}{\partial t} = \R_\varphi + 2\Ree\langle \tr T_\varphi, \partial (\log\frac{\det g_\varphi}{\det g_0}-F)\rangle_\varphi,  \ \  \varphi(0) = 0, \label{chern-calabi_flow}
\end{align}
where $\R_\varphi = \tr_{\omega_\varphi}\Ric(\omega_\varphi)$ is the Chern scalar curvature of $\omega_\varphi$ and $\tr T_\varphi = (T_\varphi)^p_{p\bullet}$ is the trace of the torsion of $\omega_\varphi$. We note that $\underline{\R}=0$ since $c_1^{\BC}(X)=0$. When the metric is K\"ahler,  this flow agrees with the Calabi flow. We note that we immediately obtain short-time existence of this flow since its leading order term is a strictly elliptic fourth-order operator and so the equation is a fourth order quasilinear parabolic equation, following the same line of reasoning as in \cite{chen-he08}. Given that the Mabuchi energy is decreasing along this flow, we prove a new a priori estimate on the evolving metric, building on work by Chen-Cheng \cite{cc18} and the author \cite{shen19}, see Theorem \ref{bounds}. In this paper, we prove

\begin{thm}
Let $(X,\omega_0)$ be a Hermitian manifold with $c_1^{\BC}(X)=0$ and $\partial\bar{\partial}\omega_0^k=0$ for $k=1,2$. A solution to the flow given by Equation \eqref{chern-calabi_flow} starting at $\omega_0$ exists as long as the Chern scalar curvature remains bounded along the flow. In addition, if the Chern sclaar curvature remains bounded for all time, then we have smooth convergence of the flow to the unique Chern-Ricci-flat metric of the form $\omega_\infty = \omega_0 + \sqrt{-1}\partial\bar{\partial}\varphi_\infty$ for some smooth function $\varphi_\infty$ on $X$. \label{main_thm}
\end{thm}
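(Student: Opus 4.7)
The plan is to split the theorem into (i) a long-time existence claim that reduces to a priori estimates on $\varphi$ to all orders under a uniform bound on the Chern scalar curvature $\R_\varphi$, and (ii) a convergence claim that then uses the gradient flow structure of \eqref{chern-calabi_flow} once such estimates are available.

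For the a priori estimates, since \eqref{chern-calabi_flow} is the gradient flow of $\Mab_{\omega_0}$, the Mabuchi energy is monotone decreasing along the flow. Combining this decrease with the assumed uniform bound on $\R_\varphi$, I would first extract a uniform bound on the entropy $\frac{1}{V}\int_X \log(\det g_\varphi / \det g_0)\, \omega_\varphi^n$: the Mabuchi functional controls the entropy up to terms involving $F$, which are uniformly bounded since $F \in C^\infty(X)$. Next, I would invoke the $L^\infty$ estimate from \cite{shen19}, the Hermitian analogue of the entropy-to-$C^0$ argument, to obtain $\|\varphi(t)\|_{C^0} \le C$ uniformly in $t$. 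With $\varphi$ uniformly bounded and with $\R_\varphi$ controlling $\Delta_{\omega_\varphi} \log(\det g_\varphi/\det g_0)$ up to torsion terms, a second-order estimate controlling $\tr_{\omega_0}\omega_\varphi$ would follow from a parabolic maximum principle applied to a quantity of the form $\log \tr_{\omega_0}\omega_\varphi - A\varphi$ for suitably large $A$, with the bound on $\R_\varphi$ used to absorb the principal bad term produced by the fourth-order structure. Once $\omega_\varphi$ is two-sided bounded and uniformly equivalent to $\omega_0$, the flow becomes strictly parabolic with controlled coefficients and higher-order estimates follow from standard parabolic Schauder bootstrap applied to the fourth-order equation \eqref{chern-calabi_flow}. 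Together these estimates rule out blow-up at any finite time at which $\R_\varphi$ remains bounded, establishing long-time existence under the stated hypothesis.

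For the convergence statement, given a uniform bound $|\R_\varphi| \le C$ for all $t \in [0,\infty)$, the previous estimates yield uniform smooth bounds $\|\varphi(t)\|_{C^k} \le C_k$ for all $t$. The gradient flow identity
\begin{equation*}
\frac{d}{dt}\Mab_{\omega_0}(\omega_{\varphi(t)}) \le 0,
\end{equation*}
together with the fact that $\Mab_{\omega_0}$ is bounded below (for instance by its value at the Chern-Ricci-flat metric produced via Tosatti--Weinkove's solution of the Hermitian Monge--Amp\`ere equation), forces the time-integrated dissipation to be finite. By Arzel\`a--Ascoli, there exist times $t_j \to \infty$ and a smooth function $\varphi_\infty$ with $\omega_\infty = \omega_0 + \sqrt{-1}\partial\bar{\partial}\varphi_\infty > 0$ such that $\varphi(t_j) \to \varphi_\infty$ smoothly; the finite-dissipation condition forces $\omega_\infty$ to be a critical point of $\Mab_{\omega_0}$, which by the Tosatti--Weinkove first-variation computation is precisely a Chern-Ricci-flat metric in the $\partial\bar{\partial}$-class of $\omega_0$. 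Uniqueness of such a metric (again from Tosatti--Weinkove) together with a Lojasiewicz--Simon-type inequality, or equivalently a convexity argument for $\Mab_{\omega_0}$ along the flow trajectory near $\omega_\infty$, promotes subsequential to full smooth convergence.

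I expect the main obstacle to be the $C^0$ estimate for $\varphi$ in the presence of torsion: in the K\"ahler setting this is handled by well-developed pluripotential-theoretic and Alexandrov--Bakelman--Pucci tools, but in the Hermitian setting torsion terms obstruct the standard arguments and one must instead adapt a Moser iteration to the Hermitian geometry, which is the role of \cite{shen19} together with the new estimate alluded to in the introduction. A secondary difficulty is upgrading subsequential convergence to full convergence at infinity for a fourth-order flow in the Hermitian setting, where the energy-convexity framework used by Calabi--Chen in the K\"ahler case is not directly available.
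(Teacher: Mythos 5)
Your overall skeleton matches the paper's: reduce long-time existence to a priori estimates for $\varphi$ under a bound on $\R_\varphi$ plus a bound on the Mabuchi energy (hence the entropy), then use the gradient-flow structure, finite dissipation, and Arzel\`a--Ascoli for convergence. However, there are two genuine gaps where your proposed mechanisms would not work as stated.

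First, the second-order estimate. You propose a \emph{parabolic} maximum principle applied to $\log\tr_{\omega_0}\omega_\varphi - A\varphi$, with the $\R_\varphi$ bound ``absorbing the principal bad term produced by the fourth-order structure.'' This cannot work: the flow \eqref{chern-calabi_flow} is fourth order in $\varphi$, so the evolution of $\tr_{\omega_0}\omega_\varphi$ has no maximum principle at all --- the obstruction is the order of the operator, not the size of any single term, and a zeroth-order bound on $\R_\varphi$ does not restore it. The paper (following Chen--Cheng \cite{cc18} and \cite{shen19}) instead freezes time and treats $(\varphi,F)$ as a solution of the coupled second-order \emph{elliptic} system \eqref{coupled_eqns}, in which $\R_\varphi$ enters only through $\|\R_\varphi\|_0$; the key estimate is an integral/Moser-iteration bound on $Q = e^{A(F)}|\partial F|^2_{\omega_\varphi} + N(\tr_\omega\omega_\varphi)^{B+1}$, where the derivative of $\Delta_\varphi F$ is handled by integration by parts via Lemma \ref{div_thm} and the resulting torsion integrals are the new terms that must be controlled. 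This elliptic reformulation is the essential idea your proposal is missing, and it is the technical core of the paper (Theorem \ref{bounds}).

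Second, the upgrade from subsequential to full convergence. You invoke a Lojasiewicz--Simon inequality or a convexity argument for $\Mab_{\omega_0}$ near the limit; neither is established in this Hermitian, fourth-order setting, and the energy-convexity framework of Calabi--Chen is, as you note yourself, unavailable. The paper avoids this entirely: it first shows $f(t)=\int_X\dot\varphi^2\omega_\varphi^n\to 0$ for \emph{all} $t\to\infty$ (using $f'\le Cf^{1/2}$ to rule out recurrent spikes compatible with $\int_0^\infty f<\infty$), so every subsequential limit is a fixed point, hence Chern-Ricci-flat by Lemma \ref{fixed_pts}; then the uniqueness of the Chern-Ricci-flat metric in the $\partial\bar\partial$-class of $\omega_0$ \cite{tw09} forces all subsequential limits to coincide, which gives full smooth convergence by a standard contradiction argument. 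Relatedly, your lower bound on $\Mab_{\omega_0}$ ``by its value at the Chern-Ricci-flat metric'' presupposes that metric is a minimizer, which you have not shown; the paper's lower bound is the elementary one coming from $x\mapsto x\log x$ being bounded below.
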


Our assumption that $\omega_0$ satisfy $\partial\bar{\partial}\omega_0^k=0$ for $k=1,2$, is in fact equivalent to $\partial\bar{\partial}\omega_0^k=0$ for $k=1,2,\ldots,n-1$ and this condition is preserved by the flow. This assumption allows us to ensure that the volume $V=\int_X \omega_\varphi^n$ remains unchanged along the flow and to obtain $C^{3,\alpha}$ estimates for $\varphi$ along the flow dependent on a Chern scalar curvature bound. In order to show long-time existence, we will need to assume that the Chern scalar curvature remains bounded for all time along with a smoothing property that allows to obtain all higher order estimates on $\varphi$, following the work of Chen-He \cite{chen-he08} for the Calabi flow.

This paper is structured as follows:
\begin{itemize}
\item In Section 2, we cover the notation and basic properties of Hermitian metrics that we will need in the subsequent sections.
\item In Section 3, we discuss properties of the flow. We first show that the flow is indeed a gradient flow of the Mabuchi energy. We then prove that its fixed points are precisely the constant Chern scalar curvature metrics, which when $c_1^{\BC}(X)=0$, are in fact Chern-Ricci-flat metrics. Lastly, we discuss short-time existence of the flow.
\item In Section 4, we prove a priori estimates along the flow following the methods of Chen-Cheng in \cite{cc18} and previous work by the author in \cite{shen19}. Finally, we prove Theorem \ref{main_thm}.
\end{itemize}

\section{Preliminaries}

In this section, we include several well-known identities that will be needed for computations in the next sections  (see also Section 2 of \cite{tw15}). 

Let $X$ be a compact complex manifold of complex dimension $n$. We will work in complex coordinates $z^1,\ldots, z^n$ and write tensors in terms of this coordinate system. Let $g=g_{i\bar{j}}$ be a Hermitian metric on $X$ with associated $(1,1)$-form $\omega=\sqrt{-1}g_{i\bar{j}}dz^i\wedge d\overline{z^j}$ where all repeated indices are being summed from $1$ to $n$. We will also refer to $\omega$ as a Hermitian metric. 

Let $\nabla$ be the \textit{Chern connection} associated to $\omega$, defined for a $(1,0)$-form $a = a_kdz^k$ by
\begin{align}\begin{split}
&\nabla_i a_k= \partial_i a_k - \Gamma_{ik}^j a_j\ , \ \ \  \nabla_i \overline{a_k} = \partial_i \overline{a_k}\label{cov_to_partial}
\end{split}\end{align}
and for a vector field $X=X^k\partial_k$ by
\begin{align*}
\nabla_i X^k = \partial_i X^k +\Gamma^k_{ij}X^j\ , \ \ \ \nabla_i \overline{X^k} = \partial_i\overline{X^k}
\end{align*}
where $\Gamma^k_{ij} = g^{k\bar{p}}\partial_i g_{j\bar{p}}$ is the Christoffel symbol of $g$ and $g^{k\bar{p}}g_{i\bar{p}} = \delta_{ik}$. 
The Chern connection is compatible with the metric $g$, that is, $\nabla_k g_{i\bar{j}}=0$ $\forall i,j,k$. We define the \textit{trace} of a real $(1,1)$-form $\alpha=\alpha_{i\bar{j}}dz^i\wedge d\overline{z^j}$ with respect to $\omega$ by $$\tr_\omega \alpha = g^{i\bar{j}}\alpha_{i\bar{j}} = \tfrac{n\omega^{n-1}\wedge \alpha}{\omega^n}.$$

The \textit{torsion} of $g$ is defined by $T^k_{ij} = \Gamma^k_{ij} - \Gamma^k_{ji}.$ Let $\omega_\varphi = \omega+\sqrt{-1}\partial\bar{\partial}\varphi$ be another Hermitian metric on $X$. From this definition, it is clear that $$(\partial \omega)_{jk\bar{\ell}} = (\partial \omega_\varphi)_{jk\bar{\ell}}$$ where $(\partial\omega)_{jk\bar{\ell}} = \partial_j g_{k\bar{\ell}} - \partial_k g_{j\bar{\ell}}$. Denoting the torsion of $\omega_\varphi$ by $\tilde{T}$, it follows that
\begin{align}\begin{split}\label{torsion}
T^p_{jk}g_{p\bar{\ell}} = (\partial\omega)_{jk\bar{\ell}} &= (\partial\omega_\varphi)_{jk\bar{\ell}} = \tilde{T}^q_{jk}\tilde{g}_{q\bar{\ell}},
\end{split}
\end{align}
where $\tilde{g}_{i\bar{j}}$ is the metric in coordinates for $\omega_\varphi$. For simplicity, we will use the notation $$ T_{jk\bar{\ell}}=T^q_{jk}g_{q\bar{\ell}} \ , \ \ \tilde{T}_{jk\bar{\ell}}=\tilde{T}^p_{jk}\tilde{g}_{p\bar{\ell}} $$ and so the first equality in Equation \eqref{torsion} can be rewritten as $T_{jk\bar{\ell}}=\tilde{T}_{jk\bar{\ell}}$. In addition, we will let $(\tr T)_j$ denote $T^p_{pj}$. The \textit{curvature tensor} is defined by
\begin{align*}
R_{i\bar{j}k}^{\;\;\;\;\;p} = -\partial_{\bar{j}}\Gamma^p_{ik} \ , \ \ \ R_{i\bar{j}k\bar{\ell}} = g_{p\bar{\ell}}R_{i\bar{j}k}^{\;\;\;\;\;p}
\end{align*}
where we note that $\overline{R_{i\bar{j}k\bar{\ell}}} = R_{j\bar{i}\ell\bar{k}}$. We can commute the indices of the curvature tensor as follows:
\begin{align}\begin{split}\label{curv}
R_{i\bar{j}k}^{\;\;\;\;\;p} - R_{k\bar{j}i}^{\;\;\;\;\;p} = \partial_{\bar{j}}\Gamma^p_{ki} - \partial_{\bar{j}}\Gamma^p_{ik} = \partial_{\bar{j}}T^p_{ki}.
\end{split}\end{align}

The \textit{Chern-Ricci curvature} of $\omega$ is defined by $$R_{i\bar{j}} = g^{k\bar{\ell}}R_{i\bar{j}k\bar{\ell}} = -\partial_i \partial_{\bar{j}} \log\det g,$$
its associated form by $$\Ric(\omega)=\sqrt{-1}R_{i\bar{j}}dz^i\wedge d\overline{z^j}$$ and its \textit{Chern scalar curvature} by $$\R(\omega) = g^{i\bar{j}}R_{i\bar{j}} = \tr_\omega \Ric(\omega).$$

The following commutation formulae will be useful to us in the later sections. For a $(1,0)$-form $a=a_kdz^k$,
\begin{align}\begin{split}\label{a_comm_formula}
[\nabla_i, \nabla_{\bar{j}}]a_k &= -R_{i\bar{j}k\;}^{\;\;\;\;\;\ell}a_{\ell}, \ \ \ \ \ \
[\nabla_i,\nabla_{\bar{j}}]\overline{a_{l}} = R_{i\bar{j}\;\;\bar{\ell}}^{\;\;\;\bar{k}}\overline{a_{k}}\\
[\nabla_i,\nabla_j] \overline{a_k} &= -T^r_{ij}\nabla_r \overline{a_k}, \ \ \ \ \
[\nabla_{\bar{i}},\nabla_{\bar{j}}]a_k = -\overline{T^r_{ij}}\nabla_{\bar{r}}a_k
\end{split}\end{align}
and for a scalar function $f$, we have 
\begin{align}\begin{split}\label{f_comm_formula}
[\nabla_i,\nabla_j] f &= -T_{ij}^r \nabla_r f.
\end{split}\end{align}

The \textit{Chern Laplacian} with respect to $g$ of a function $f$ is defined by
\begin{align*}
\Delta f = \tr_\omega \sqrt{-1}\partial\bar{\partial}f = g^{i\bar{j}}\partial_i \partial_{\bar{j}} f = g^{i\bar{j}} \nabla_i \nabla_{\bar{j}} f  .
\end{align*}

For a complex manifold, if we assume that $
\partial\bar{\partial}\omega^k = 0 \ \text{  for  } \ k=1,2, \label{dd_assumption}
$
then in fact $\partial\bar{\partial}\omega$ vanishes for all $k=1,\ldots, n-1$, which follows from a straightforward computation. Under this assumption, the volume of the metric remains unchanged up to addition of $\partial\bar{\partial}$ of a smooth function, that is,
 \begin{align}
\int_X (\omega+\sqrt{-1}\partial\bar{\partial}\psi)^n = \int_X \omega^n, \label{volume_preservation}
\end{align}
for all $\psi$ such that $\omega+\sqrt{-1}\partial\bar{\partial}\psi>0$, hence is preserved by the flow. The condition that $\partial\bar\partial \omega_\varphi=0$ is also preserved by the flow as
\begin{align*}
\frac{d}{dt} \partial\bar\partial \omega_\varphi = \partial\bar\partial \big(\sqrt{-1}\partial\bar\partial (\R_\varphi + 2\Ree\langle \tr T_\varphi, \partial (\log\frac{\det g_\varphi}{\det g_0}-F)\rangle_\varphi)\big)=0,
\end{align*}
and similarly $\partial\bar\partial \omega_\varphi^2=0$ is preserved as
\begin{align*}
\frac{d}{dt} \partial\bar\partial \omega_\varphi^2 = 2\partial\bar\partial \omega_\varphi \wedge \frac{d}{dt}\partial\bar\partial\omega_\varphi = 0,
\end{align*}
which together gives us that $\partial\bar\partial\omega_\varphi^k=0$ along the flow if $\partial\bar\partial\omega_0^k=0$.

The Gauduchon condition, $\partial\bar{\partial}\omega^{n-1}=0$, ensures the vanishing of the integrals of Chern Laplacians of functions:
\begin{align*}
\int_X \Delta f \omega^n = n\int_X \sqrt{-1}\partial\bar{\partial} f \wedge\omega^{n-1} = n \int_X f \sqrt{-1}\partial\bar{\partial}\omega^{n-1} = 0 .
\end{align*}

We will need the following divergence theorem in the non-K\"ahler setting (see Lemma 1 of \cite{picard_cetraro}):
\begin{lemma} For any Hermitian metric $\omega$ and $V\in \Gamma(X,T^{1,0}, X)$, we have that
\begin{align*}
\int_X \nabla_i V^i \omega^n = \int_X (\tr T)_{i}V^i \omega^n,
\end{align*}
where $\nabla$ is the Chern connection with respect to $\omega$ and $(\tr T)_i = T^p_{pi}$ is the trace of the torsion of $\omega$. \label{div_thm}
\end{lemma}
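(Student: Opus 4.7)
The plan is to reduce the identity to Stokes' theorem applied to the $(2n-1)$-form $\iota_V \omega^n$, after establishing a pointwise identity that expresses $\nabla_i V^i - (\tr T)_i V^i$ as an ordinary divergence with respect to $\omega^n$. The key algebraic input is the trace of the Chern Christoffel symbols: starting from $\Gamma^k_{ij} = g^{k\bar p}\partial_i g_{j\bar p}$ and $T^k_{ij}g_{k\bar p} = (\partial\omega)_{ij\bar p}$, a direct computation gives
$$\Gamma^i_{ij} - \partial_j \log\det g = g^{i\bar p}\bigl(\partial_i g_{j\bar p} - \partial_j g_{i\bar p}\bigr) = T^i_{ij} = (\tr T)_j.$$
Substituting this into $\nabla_i V^i = \partial_i V^i + \Gamma^i_{ij}V^j$ yields the pointwise identity
$$\nabla_i V^i - (\tr T)_i V^i = \partial_i V^i + V^j \partial_j \log\det g = \frac{1}{\det g}\partial_i\bigl(V^i \det g\bigr).$$

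Next, I would work in local coordinates where $\omega^n = n!(\sqrt{-1})^n \det g \, dz^1 \wedge d\bar z^1 \wedge \ldots \wedge dz^n \wedge d\bar z^n$, and identify the resulting $(n,n)$-form $\tfrac{1}{\det g}\partial_i(V^i \det g)\, \omega^n$ with $d(\iota_V \omega^n)$. Since $\iota_V \omega^n$ has bidegree $(n-1,n)$, its $\bar\partial$ vanishes automatically, so $d\iota_V \omega^n = \partial \iota_V \omega^n$, and by Cartan's formula this equals $\mathcal L_V \omega^n$ (because $d\omega^n = 0$ as $\omega^n$ is top-dimensional). A direct computation using $\mathcal L_V(dz^i) = dV^i$ and $\mathcal L_V(d\bar z^i) = 0$ then confirms that
$$\mathcal L_V(\omega^n) = \frac{1}{\det g}\partial_i\bigl(V^i \det g\bigr)\, \omega^n.$$
Stokes' theorem on the closed manifold $X$ then gives $\int_X d(\iota_V \omega^n) = 0$, which combined with the pointwise identity yields the claim.

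There is no deep obstacle here: the content of the lemma is the classical divergence theorem, modified by the torsion correction $(\tr T)_i V^i$ that appears precisely because the Chern connection fails to be torsion-free in the non-Kähler setting. The only care required is sign and bidegree bookkeeping, in particular tracking the factor of $(\sqrt{-1})^n$ in the local expression for $\omega^n$ and using crucially that $V$ is of type $(1,0)$ so that $\mathcal L_V$ annihilates the $d\bar z^i$ differentials.
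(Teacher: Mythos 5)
Your proof is correct. Note that the paper does not actually prove this lemma; it states it with a citation to Lemma~1 of \cite{picard_cetraro}, and your argument is precisely the standard one behind that reference: the trace identity $\Gamma^i_{ij} = \partial_j \log\det g + (\tr T)_j$ (which follows from $T^k_{ij} = g^{k\bar p}(\partial_i g_{j\bar p} - \partial_j g_{i\bar p})$ and $\partial_j\log\det g = g^{i\bar p}\partial_j g_{i\bar p}$) converts $\nabla_i V^i - (\tr T)_i V^i$ into $\tfrac{1}{\det g}\partial_i(V^i\det g)$, which you correctly identify with the globally defined form $d(\iota_V\omega^n) = \mathcal{L}_V\omega^n$, so Stokes finishes it. The bidegree bookkeeping you flag (only $\partial\iota_V\omega^n$ survives, and only the $\partial_i V^i\,dz^i$ term of $dV^i$ contributes to $\mathcal{L}_V\omega^n$) is handled correctly, so there is nothing to add.
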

Using this divergence theorem, we can show the following:
\begin{lemma}
For any metric $g$ with associated $(1,1)$-form $\omega$, under the assumption that $\omega$ is Gauduchon, i.e. $\partial\bar{\partial}\omega^{n-1}=0$, we have that 
\begin{align*}
g^{j\bar{k}}\big(\nabla_{\bar{k}}(\tr T)_{j} - \overline{(\tr T)_{k}}(\tr T)_{j}\big)=0,
\end{align*}\label{torsion_vanishing}
where $\nabla$ is the Chern connection with respect to $\omega$ and $T$ is the torsion of the Chern connection with respect to $\omega$.\label{torsion_identity} 
\end{lemma}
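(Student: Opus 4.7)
The plan is to derive this pointwise identity from an integrated version, using an arbitrary smooth test function $\phi$ to localize, via two applications of the divergence theorem (Lemma \ref{div_thm}).

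First, the Gauduchon hypothesis $\partial\bar\partial\omega^{n-1}=0$ gives $\int_X \Delta\phi\,\omega^n = 0$ for every $\phi\in C^\infty(X,\mathbb{R})$, as already noted in the preliminaries. Writing $\Delta\phi = \nabla_i(g^{i\bar j}\nabla_{\bar j}\phi)$ by metric compatibility of the Chern connection, I would apply Lemma \ref{div_thm} to the $(1,0)$-vector field $V^i := g^{i\bar j}\nabla_{\bar j}\phi$ to obtain
\[
\int_X g^{i\bar j}(\tr T)_i\,\partial_{\bar j}\phi\,\omega^n = 0 \quad \text{for every smooth } \phi.
\]

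Next, I would apply the complex conjugate of Lemma \ref{div_thm}, namely $\int_X \nabla_{\bar j}W^{\bar j}\,\omega^n = \int_X \overline{(\tr T)_j}W^{\bar j}\,\omega^n$ for any $(0,1)$-vector field $W$, to $W^{\bar j} := g^{i\bar j}(\tr T)_i\,\phi$. Expanding $\nabla_{\bar j}W^{\bar j}$ by the Leibniz rule, using $\nabla g^{i\bar j}=0$ and $\nabla_{\bar j}\phi = \partial_{\bar j}\phi$, produces two terms, and the one proportional to $\partial_{\bar j}\phi$ vanishes by the previous step. After rearranging,
\[
\int_X \phi\,g^{i\bar j}\bigl(\nabla_{\bar j}(\tr T)_i - \overline{(\tr T)_j}(\tr T)_i\bigr)\,\omega^n = 0
\]
for every smooth $\phi$, so the integrand must vanish pointwise. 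Relabeling the dummy indices $i\to j$, $j\to k$ gives the stated identity.

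The argument is just integration by parts twice, so I do not anticipate any real obstacle; the only care needed is bookkeeping of the raised and lowered indices together with the identity $\nabla g = 0$ for the Chern connection, which allows $g^{i\bar j}$ to be pulled freely in and out of the covariant derivative.
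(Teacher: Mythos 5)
Your proof is correct and follows essentially the same route as the paper's: two applications of Lemma \ref{div_thm} combined with the Gauduchon condition, tested against an arbitrary smooth function to localize. The only cosmetic difference is that the paper obtains the first integrated identity by expanding $\int_X \Delta(u^2)\,\omega^n=0$ and therefore tests the final identity against $u^2$ rather than a general $\phi$; your version is marginally more direct.
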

\begin{proof}
This identity follows directly from the Gauduchon condition. One way to see this simply is that for any smooth function $u$ on $X$, it follows from the Gauduchon condition and Lemma \ref{div_thm} that
\begin{align*}
0 &= \int_X \Delta u^2 \omega^n = 2\int_X u\Delta u \omega^n + 2\int_X g^{j\bar{k}} \nabla_j u \nabla_{\bar{k}} u \omega^n\\
&= -2\int_X g^{j\bar{k}}\nabla_j u \nabla_{\bar{k}}u \omega^n + 2\int_X g^{j\bar{k}}(\tr T)_j u \nabla_{\bar{k}} u\omega^n + 2\int_X g^{j\bar{k}} \nabla_j u \nabla_{\bar{k}} u \omega^n\\
&= 2\int_X g^{j\bar{k}}(\tr T)_j u \nabla_{\bar{k}} u\omega^n\\
&= \int_X g^{j\bar{k}}(\tr T)_j \nabla_{\bar{k}} u^2\omega^n\\
&= -\int_X (g^{j\bar{k}}(\nabla_{\bar{k}}(\tr T)_{j} - \overline{(\tr T)_{k}}(\tr T)_{j})u^2  \omega^n.
\end{align*}
Since this holds for arbitrary $u$, it follows that $g^{j\bar{k}}(\nabla_{\bar{k}}(\tr T)_{j} - \overline{(\tr T)_{k}}(\tr T)_{j})=0$. We note that if $X$ were not compact, this identity would still hold since we could take the function $u$ to be compactly supported.
\end{proof}
This outlines the key identities and formulae that we will need for the computations in this paper. Note that throughout this paper, the constants may vary from one line to another.

\section{Properties of the flow}
In this section, we prove that the flow defined in Equation \eqref{chern-calabi_flow} is indeed a gradient flow of the Mabuchi energy, that fixed points of the flow are exactly those metrics that are Chern-Ricci-flat, and that we have short-time existence of the flow.

Let $\omega_0$ be Hermitian on $X$ satisfying $\partial\bar{\partial}\omega^k=0$ for $k=1,2$. We will assume that $\omega_0$ is normalized such that $V=\int_X\omega_0^n=1$ and we note that this integral remains unchanged along the flow, see Equation \eqref{volume_preservation}. Using the assumption that $c_1^{\BC}(X)=0$, we have that $$\Ric(\omega_0) = \sqrt{-1}\partial\bar{\partial}F,$$ for a smooth function $F$ which we call the Chern-Ricci potential, normalized so that $\int_X e^F\omega_0^n = \int_X \omega_0^n$. Let us define $$\mathcal{H} = \{\omega_\varphi : \omega_\varphi=\omega_0 + \sqrt{-1}\partial\bar{\partial}\varphi >0,  \varphi \in C^\infty(X)\}.$$ We will show that the flow defined in Equation \eqref{chern-calabi_flow} is a gradient flow of the Mabuchi energy $\Mab_{\omega_0}: \mathcal{H}\rightarrow \mathbb{R}$ defined by
\begin{align*}
\Mab_{\omega_0}(\omega_\varphi) = \int_X \Big(\log\frac{\omega_\varphi^n}{\omega_0^n} - F \Big)\omega_\varphi^n + \int_X F\omega_0^n.
\end{align*}
One can check that this definition of Mabuchi energy agrees with the formula in the K\"ahler setting \cite{tian2000} (see also Section 9 of \cite{tw13}). 

To set some notation, let $\Omega:= e^F\omega_0^n$ be a volume form on $X$ which satisfies $$\sqrt{-1}\partial\bar{\partial}\log\Omega=0$$ by the fact that $F$ is the Chern-Ricci potential for $\omega_0$. In this way, we see that the Chern scalar curvature of $\omega_\varphi$ can be expressed as
\begin{align*}
\R_\varphi = -\Delta_\varphi \log\frac{\omega_\varphi^n}{\Omega}.
\end{align*}
\begin{lemma}
The Mabuchi energy is decreasing along the flow defined in Equation \eqref{chern-calabi_flow}.
\end{lemma}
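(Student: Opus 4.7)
The plan is to first rewrite the Mabuchi energy in a more tractable form and then differentiate in time, manipulating the result via two applications of the divergence theorem (Lemma \ref{div_thm}) together with the Gauduchon-torsion identity (Lemma \ref{torsion_vanishing}) so as to obtain an expression that is manifestly non-positive. Setting $u := \log(\omega_\varphi^n/\Omega)$, so that $\R_\varphi = -\Delta_\varphi u$, we have $\omega_\varphi^n = e^{u+F}\omega_0^n$ and hence
\begin{align*}
\Mab_{\omega_0}(\omega_\varphi) = \int_X u\, \omega_\varphi^n + \int_X F\,\omega_0^n,
\end{align*}
the second term being a $t$-independent constant. Under the flow $\frac{\partial \varphi}{\partial t} = \dot\varphi$ one computes $\dot u = \Delta_\varphi\dot\varphi$ and $\frac{\partial}{\partial t}\omega_\varphi^n = \Delta_\varphi\dot\varphi\cdot\omega_\varphi^n$. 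Because $\partial\bar{\partial}\omega_0^k=0$ for $k=1,\ldots,n-1$ and the same holds for $\omega_\varphi$, the metric $\omega_\varphi$ is Gauduchon, so $\int_X \Delta_\varphi\dot\varphi\,\omega_\varphi^n = 0$. Therefore
\begin{align*}
\tfrac{d}{dt}\Mab_{\omega_0}(\omega_\varphi) = \int_X u\,\Delta_\varphi\dot\varphi\,\omega_\varphi^n.
\end{align*}

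Next I would perform the first integration by parts on this expression. Applying Lemma \ref{div_thm} to $V^i = u\,g_\varphi^{i\bar{j}}\nabla_{\bar{j}}\dot\varphi$ yields
\begin{align*}
\int_X u\,\Delta_\varphi\dot\varphi\,\omega_\varphi^n = \int_X u\langle \tr T_\varphi,\partial\dot\varphi\rangle_\varphi\,\omega_\varphi^n - \int_X \langle \partial u,\partial\dot\varphi\rangle_\varphi\,\omega_\varphi^n,
\end{align*}
and a second integration by parts (this time using the conjugate divergence on $\bar{W}^{\bar{j}} = g_\varphi^{i\bar{j}}\nabla_i u\cdot\dot\varphi$) rewrites the last integral as
\begin{align*}
\int_X \langle \partial u,\partial\dot\varphi\rangle_\varphi\,\omega_\varphi^n = \int_X \dot\varphi\,\R_\varphi\,\omega_\varphi^n + \int_X \dot\varphi\,\overline{\langle \tr T_\varphi,\partial u\rangle_\varphi}\,\omega_\varphi^n.
\end{align*}
Averaging the identity obtained this way with the one obtained by running the same computation with the roles of holomorphic and antiholomorphic indices swapped (taking real parts), one arrives at
\begin{align*}
\tfrac{d}{dt}\Mab_{\omega_0}(\omega_\varphi) = \int_X u\,\Ree\langle \tr T_\varphi,\partial\dot\varphi\rangle_\varphi\,\omega_\varphi^n - \int_X \dot\varphi\,\Ree\langle \tr T_\varphi,\partial u\rangle_\varphi\,\omega_\varphi^n - \int_X \dot\varphi\,\R_\varphi\,\omega_\varphi^n.
\end{align*}

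The key step — which is where Lemma \ref{torsion_vanishing} enters and is the main place where one has to be careful — is to handle the first integral on the right. Integrating by parts once more, applied to $\bar{W}^{\bar{j}} = u(\tr T_\varphi)_i g_\varphi^{i\bar{j}}\dot\varphi$, the divergence $\nabla_{\bar{j}}\bar{W}^{\bar{j}}$ produces a derivative of $\tr T_\varphi$; Lemma \ref{torsion_vanishing} for $\omega_\varphi$ identifies $g_\varphi^{i\bar{j}}\nabla_{\bar{j}}(\tr T_\varphi)_i$ with $g_\varphi^{i\bar{j}}\overline{(\tr T_\varphi)_j}(\tr T_\varphi)_i$, and this term precisely cancels the contribution of the torsion term coming from Lemma \ref{div_thm}. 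The net outcome is the antisymmetry
\begin{align*}
\int_X u\,\langle \tr T_\varphi,\partial\dot\varphi\rangle_\varphi\,\omega_\varphi^n = -\int_X \dot\varphi\,\langle \tr T_\varphi,\partial u\rangle_\varphi\,\omega_\varphi^n,
\end{align*}
and the same for complex conjugates. Substituting this back gives
\begin{align*}
\tfrac{d}{dt}\Mab_{\omega_0}(\omega_\varphi) = -\int_X \dot\varphi\bigl(\R_\varphi + 2\Ree\langle \tr T_\varphi,\partial u\rangle_\varphi\bigr)\omega_\varphi^n = -\int_X \dot\varphi^{\,2}\,\omega_\varphi^n \le 0,
\end{align*}
where the last equality uses the definition \eqref{chern-calabi_flow} of the flow (noting that $\log(\det g_\varphi/\det g_0) - F = u$). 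This both proves monotonicity and identifies the flow as the gradient flow of $\Mab_{\omega_0}$ in the natural $L^2(\omega_\varphi^n)$ metric. The main obstacle in the calculation is keeping track of the torsion corrections generated by each integration by parts and recognising that Lemma \ref{torsion_vanishing} is exactly what is needed for them to cancel.
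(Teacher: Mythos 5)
Your proof is correct and follows essentially the same route as the paper: differentiate the energy, integrate by parts twice via Lemma \ref{div_thm}, and invoke Lemma \ref{torsion_vanishing} to cancel the derivative-of-torsion term, arriving at $\frac{d}{dt}\Mab_{\omega_0}(\omega_\varphi) = -\int_X \dot\varphi^2\,\omega_\varphi^n$. The only difference is organizational — you package the torsion bookkeeping as a separate antisymmetry identity and make explicit the vanishing of $\int_X \Delta_\varphi\dot\varphi\,\omega_\varphi^n$, whereas the paper carries out the same integrations by parts directly on $\log(\omega_\varphi^n/\Omega)$.
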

\begin{proof}
Taking the time derivative of Mabuchi energy, we have that
\begin{align*}
\frac{\partial}{\partial t}\Mab_{\omega_0}(\omega_\varphi) &= \int_X \Delta_\varphi \dot{\varphi} \omega_\varphi^n + \int_X \Big(\log\frac{\omega_\varphi^n}{\omega_0^n}-F\Big)\Delta_\varphi\dot{\varphi}\omega_\varphi^n\\
&= \int_X \log\frac{\omega_\varphi^n}{e^F\omega_0^n}\Delta_\varphi\dot{\varphi}\omega_\varphi^n
\end{align*}

Let $\tilde{\nabla}$ be the Chern connection with respect to $\omega_\varphi$. Substituting $\Omega = e^F\omega_0^n$ and integrating by parts using Lemma \ref{div_thm}, we have that
\begin{align*}
\frac{\partial}{\partial t}\Mab_{\omega_0}(\omega_\varphi) &= \int_X \log\frac{\omega_\varphi^n}{\Omega}\Delta_\varphi \dot{\varphi}\omega_\varphi^n\\
&= -\int_X (g_\varphi)^{j\bar{k}}\tilde{\nabla}_j \log\frac{\omega_\varphi^n}{\Omega}\tilde{\nabla}_{\bar{k}}\dot{\varphi}\omega_\varphi^n + \int_X (g_\varphi)^{j\bar{k}}(\tr T_\varphi)_j\log\frac{\omega_\varphi^n}{\Omega} \tilde{\nabla}_{\bar{k}}\dot{\varphi}\omega_\varphi^n\\
&= \int_X (g_\varphi)^{j\bar{k}}\tilde{\nabla}_j\tilde{\nabla}_{\bar{k}}\log\frac{\omega_\varphi^n}{\Omega} \dot{\varphi}\omega_\varphi^n - \int_X (g_\varphi)^{j\bar{k}}\overline{(\tr T_\varphi)_k}\tilde{\nabla}_j \log\frac{\omega_\varphi^n}{\Omega}\dot{\varphi}\omega_\varphi^n\\
& \ \ \ \ -\int_X (g_\varphi)^{j\bar{k}}\tilde{\nabla}_{\bar{k}}((\tr T_\varphi)_j\log\frac{\omega_\varphi^n}{\Omega})\dot{\varphi}\omega_\varphi^n + \int_X (g_\varphi)^{j\bar{k}}\overline{(\tr T_\varphi)_k}(\tr T_\varphi)_j\log\frac{\omega_\varphi^n}{\Omega}\dot{\varphi}\omega_\varphi^n\\
&= -\int_X \dot{\varphi}\Big({\R_\varphi} + 2\Ree((g_\varphi)^{j\bar{k}}(\tr T_\varphi)_j\tilde{\nabla}_{\bar{k}} \log\frac{\omega_\varphi^n}{\Omega})\Big)\omega_\varphi^n,
\end{align*}
where the second last step follows from the fact that $\sqrt{-1}\partial\bar{\partial}\log\Omega=0$ and the last step uses Lemma \ref{torsion_vanishing}.
Since this flow is defined by the following evolution equation
\begin{align*}
\dot{\varphi}&= {\R_\varphi} + 2\Ree((g_\varphi)^{j\bar{k}}(\tr T_\varphi)_j\partial_{\bar{k}} \log\frac{\omega_\varphi^n}{\Omega}),
\end{align*}
we immediately have that the Mabuchi energy is decreasing along this flow.
\end{proof}

We now check that all fixed points of this flow are of constant Chern scalar curvature which, in the setting that $c_1^{\BC}(X)=0$, is equivalent to being Chern-Ricci-flat.
\begin{lemma}
A metric is a fixed point of the flow defined by Equation \eqref{chern-calabi_flow} if and only if it is Chern-Ricci-flat. \label{fixed_pts}
\end{lemma}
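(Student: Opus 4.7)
The plan is to recast both the fixed-point condition and the Chern-Ricci-flat condition in terms of the globally defined real function $u := \log(\omega_\varphi^n/\Omega)$, where $\Omega = e^F\omega_0^n$. The key identity $\sqrt{-1}\partial\bar\partial\log\Omega = 0$ yields $\sqrt{-1}\partial\bar\partial u = -\Ric(\omega_\varphi)$ and $\R_\varphi = -\Delta_\varphi u$, so Chern-Ricci-flatness is equivalent to $u$ being pluriharmonic, while the fixed-point condition $\dot\varphi = 0$ reads
\begin{equation*}
-\Delta_\varphi u + 2\Ree\big((g_\varphi)^{j\bar k}(\tr T_\varphi)_j\,\partial_{\bar k}u\big) = 0.
\end{equation*}

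For the forward direction, if $\Ric(\omega_\varphi) = 0$ then $\sqrt{-1}\partial\bar\partial u = 0$, so $\Delta_\varphi u = 0$. The strong maximum principle for the Chern Laplacian on the compact manifold $X$ forces $u$ to be constant, and both $\R_\varphi$ and $\partial u$ vanish, so the fixed-point equation holds.

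For the reverse direction, I would test the fixed-point equation against $u$ and integrate against $\omega_\varphi^n$. Rewriting $\R_\varphi = -\Delta_\varphi u$ and integrating the $-u\Delta_\varphi u$ term by parts via Lemma \ref{div_thm} produces $\int_X |\nabla u|^2\omega_\varphi^n$ together with a torsion correction proportional to $\Ree\int_X (g_\varphi)^{j\bar k}(\tr T_\varphi)_j u\,\partial_{\bar k}u\,\omega_\varphi^n$. After combining with the $2\Ree(\cdots)$ term from the fixed-point equation, the two torsion pieces amalgamate, via $2u\,\partial_{\bar k}u = \partial_{\bar k}(u^2)$, into $\Ree\int_X (g_\varphi)^{j\bar k}(\tr T_\varphi)_j\,\partial_{\bar k}(u^2)\,\omega_\varphi^n$. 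A second integration by parts using Lemma \ref{div_thm}, combined with Lemma \ref{torsion_identity} applied to $\omega_\varphi$ (itself Gauduchon since $\partial\bar\partial\omega_0^k = 0$ for $k = 1,\ldots,n-1$ and $\partial\bar\partial$ annihilates powers of $\sqrt{-1}\partial\bar\partial\varphi$), shows this last integral vanishes. One is left with $\int_X |\nabla u|^2\omega_\varphi^n = 0$, so $u$ is constant; the volume normalization $\int_X \omega_\varphi^n = \int_X \Omega$ then pins $u \equiv 0$, hence $\Ric(\omega_\varphi) = 0$.

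The main obstacle is bookkeeping the torsion contributions in the reverse direction: every integration by parts on a non-K\"ahler Hermitian manifold picks up a $\tr T_\varphi$ factor via Lemma \ref{div_thm}, and the argument succeeds only because these contributions ultimately cancel. Lemma \ref{torsion_identity} is the Gauduchon identity responsible for the final cancellation, mirroring precisely the role it played in the proof that the Mabuchi energy is monotone along the flow.
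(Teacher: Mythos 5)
Your proposal is correct and follows essentially the same route as the paper: both directions hinge on recasting everything in terms of $u=\log(\omega_\varphi^n/\Omega)$, proving $\Delta_\varphi u=0\Rightarrow u=\mathrm{const}$ for the forward direction, and, for the reverse direction, pairing the fixed-point equation with $u$, integrating by parts via Lemma \ref{div_thm}, and killing the resulting torsion integral with Lemma \ref{torsion_identity} applied to the Gauduchon metric $\omega_\varphi$. The only (harmless) cosmetic differences are your use of the strong maximum principle in place of the paper's Gauduchon integral identity \eqref{gauduchon_identity}, and the unnecessary final normalization step ($u$ constant already gives $\Ric(\omega_\varphi)=-\sqrt{-1}\partial\bar{\partial}u=0$).
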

\begin{proof}

Assume that $\omega_\varphi$ is Chern-Ricci-flat. Then we have that 
$$0={\R_\varphi}=-{\Delta_\varphi}\log\frac{\omega_\varphi^n}{\Omega},$$ which implies that $\log\frac{\omega_\varphi^n}{\Omega}=const.$ since we are working on a compact manifold and integration by parts gives
\begin{align}
0 = \int_X \Delta u^2 \omega^n = 2 \int_X u\Delta u \omega^n + 2\int_X g^{j\bar{k}}\nabla_j u \nabla_{\bar{k}} u \omega^n = 2\int_X |\nabla u|^2_g \omega^n \Rightarrow u = const.,\label{gauduchon_identity}
\end{align}
where we used that $\omega$ is Gauduchon in the first equality. Using that $\log\frac{\omega_\varphi^n}{\Omega}=const$, we have that
\begin{align*}
\dot{\varphi}&= {\R_\varphi} + 2\Ree((g_\varphi)^{j\bar{k}}(\tr T_\varphi)_j\partial_{\bar{k}} \log\frac{\omega_\varphi^n}{\Omega})=0.
\end{align*}
 Thus, we have showed that a Chern-Ricci-flat metric is a fixed point of this flow.

In order to show the reverse direction, let us now assume that $\omega_\varphi$ is a fixed point of the flow, that is, $\dot{\varphi}=0$. This means that ${\R_\varphi}+2\Ree((g_\varphi)^{j\bar{k}}(\tr T_\varphi)_j\partial_{\bar{k}} \log\frac{\omega_\varphi^n}{\Omega})=0$. Let $\tilde{\nabla}$ be the Chern connection with respect to $\omega_\varphi$. Integrating against an arbitary smooth function $u$ and using Lemma \ref{div_thm} and Lemma \ref{torsion_identity}, we have that
\begin{align*}
0 &= \int_X u({\R_\varphi}+2\Ree((g_\varphi)^{j\bar{k}}(\tr T_\varphi)_j\tilde{\nabla}_{\bar{k}} \log\frac{\omega_\varphi^n}{\Omega}))\omega_\varphi^n\\
&= -\int_X u(g_\varphi)^{j\bar{k}}\tilde{\nabla}_j\tilde{\nabla}_{\bar{k}}\log\frac{\omega_\varphi^n}{\Omega}\omega_\varphi^n + 2\int_X u\Ree((g_\varphi)^{j\bar{k}}(\tr T_\varphi)_j\tilde{\nabla}_{\bar{k}} \log\frac{\omega_\varphi^n}{\Omega})\omega_\varphi^n\\
&= \int_X \tilde{\nabla}_j u (g_\varphi)^{j\bar{k}}\tilde{\nabla}_{\bar{k}}\log\frac{\omega_\varphi^n}{\Omega}\omega_\varphi^n + \int_X u (g_\varphi)^{j\bar{k}}\overline{(\tr T_\varphi)_k}\tilde{\nabla}_{j}\log\frac{\omega_\varphi^n}{\Omega}\omega_\varphi^n\\
&= -\int_X {\Delta_\varphi}u \log\frac{\omega_\varphi^n}{\Omega}\omega_\varphi^n + \int_X (g_\varphi)^{j\bar{k}}\tilde{\nabla}_j u \overline{(\tr T_\varphi)_k}\log\frac{\omega_\varphi^n}{\Omega}\omega_\varphi^n + \int_X u(g_\varphi)^{j\bar{k}}\overline{(\tr T_\varphi)_k}\tilde{\nabla}_j \log\frac{\omega_\varphi^n}{\Omega}\omega_\varphi^n\\
&= -\int_X {\Delta_\varphi}u \log\frac{\omega_\varphi^n}{\Omega}\omega_\varphi^n.
\end{align*}
Choosing $u=\log\frac{\omega_\varphi^n}{\Omega}$, we have by Equation \eqref{gauduchon_identity} that $$0=\int_X |\tilde{\nabla}(\log\frac{\omega_\varphi^n}{\Omega})|^2_{\varphi} \omega_\varphi^n\ge 0$$ and so $\log\frac{\omega_\varphi^n}{\Omega}=const.$ which implies that $\Ric(\omega_\varphi)=-\sqrt{-1}\partial\bar{\partial}\log\frac{\omega_\varphi^n}{\Omega} = 0$ and so $\omega_\varphi$ is Chern-Ricci-flat.
\end{proof}

We now discuss short-time existence of this flow, following from \cite{chen-he08}.
\begin{lemma}
There exists a unique solution $\varphi(t)$ satisfying Equation \eqref{chern-calabi_flow} for $t\in [0,T)$.
\end{lemma}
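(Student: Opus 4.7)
The plan is to recognize the flow \eqref{chern-calabi_flow} as a strictly parabolic fourth-order quasilinear PDE in $\varphi$ and then invoke the standard short-time existence theory for such equations, following Chen-He \cite{chen-he08}.

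First, I would isolate the principal symbol. Using $\R_\varphi = -\Delta_\varphi \log(\omega_\varphi^n/\Omega)$ together with $(g_\varphi)_{k\bar{\ell}} = (g_0)_{k\bar{\ell}} + \varphi_{k\bar{\ell}}$, a direct computation gives
$$\R_\varphi = -g_\varphi^{i\bar{j}}g_\varphi^{k\bar{\ell}}\varphi_{i\bar{j}k\bar{\ell}} + L(\varphi, \partial\varphi, \partial^2\varphi, \partial^3\varphi),$$
for a smooth function $L$, so the only fourth-order contribution to the right-hand side of \eqref{chern-calabi_flow} comes from $\R_\varphi$. The additional torsion term is at most third order, since $(\tr T_\varphi)_j$ involves only up to second derivatives of $\varphi$ while $\partial_{\bar{k}}\log(\omega_\varphi^n/\Omega)$ involves up to third derivatives of $\varphi$. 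Hence the flow takes the form
$$\partial_t \varphi = -g_\varphi^{i\bar{j}}g_\varphi^{k\bar{\ell}}\varphi_{i\bar{j}k\bar{\ell}} + N(\varphi, \partial\varphi, \partial^2\varphi, \partial^3\varphi),$$
with $N$ a smooth function of its arguments in a neighborhood of the initial data $\varphi = 0$.

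Next, I would verify strict (Petrovskii) parabolicity. Linearizing the right-hand side at $\varphi$ in direction $\psi$, the fourth-order part is $-g_\varphi^{i\bar{j}}g_\varphi^{k\bar{\ell}}\psi_{i\bar{j}k\bar{\ell}}$, essentially the negative of the square of the Chern Laplacian $\Delta_\varphi$; its principal symbol on a real cotangent vector $\xi \in T^*X \setminus \{0\}$ is a strictly negative quantity proportional to $-|\xi|_{\omega_\varphi}^4$. Since $\omega_0 > 0$, positivity of $\omega_\varphi$ is preserved for short time, so \eqref{chern-calabi_flow} falls into the class of equations treated in \cite{chen-he08}. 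One then applies the standard machinery: freeze the coefficients at $\varphi(0) = 0$, solve the linearized equation in parabolic H\"older spaces, and close a contraction-mapping/implicit-function scheme to produce a unique smooth solution on a maximal interval $[0, T)$. Uniqueness follows from the same framework, or alternatively from an energy estimate for the difference of two solutions linearized against a fixed background metric.

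The main obstacle is essentially bookkeeping: carefully verifying that every term arising from $\tr T_\varphi$ and from $\partial\log(\omega_\varphi^n/\Omega)$ is indeed of order at most three in $\varphi$, with smooth coefficient dependence on $\varphi$, $\partial\varphi$, $\partial^2\varphi$, and $\partial^3\varphi$, so that the functional-analytic setup of Chen-He transfers verbatim to the non-K\"ahler setting.
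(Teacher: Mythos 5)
Your proposal is correct and follows essentially the same route as the paper: the paper likewise observes that the flow differs from the Calabi flow only by terms of lower order in $\varphi$, so that short-time existence and uniqueness follow from the standard fourth-order quasilinear parabolic theory of Chen--He \cite{chen-he08}. Your write-up simply makes explicit the principal symbol computation and the order count for the torsion term (correctly using that the lowered torsion $T_{jk\bar{\ell}}$ is unchanged by adding $\sqrt{-1}\partial\bar{\partial}\varphi$), which the paper leaves implicit.
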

\begin{proof}
The flow evolves the potential function $\varphi$ by
\begin{align*}
\frac{\partial \varphi}{\partial t} &= \R_\varphi + 2\Ree\langle(\tr T_\varphi), \partial \log\frac{\det g_\varphi}{e^F\det g_0}\rangle_\varphi\\
& = -A(\nabla \varphi,\nabla^2\varphi) \varphi + f(\nabla\varphi, \nabla^2\varphi, \nabla^3\varphi).
\end{align*}
We see that this flow differs from the Calabi flow only by addition of lower order terms in $\varphi$ which can be bundled into the $f$ term, while $A$ remains a strictly elliptic operator with coefficients depending on first- and second-order derivatives of $\varphi$. Thus, the short-time existence of this flow follows the same way as for the Calabi flow, using standard parabolic theory, see \cite{chen-he08} and references therein.
\end{proof}

\section{Proof of the main theorem}

Our goal in this section is to prove a priori estimates for $\varphi$ along this flow, conditional on a bound on the Chern scalar curvature of the evolving metric. We then prove Theorem \ref{main_thm}. 

Let us begin with showing the a priori estimates:
\begin{thm}
Let $(X, \omega)$ be a compact Hermitian manifold with $c_1^{\BC}(X)=0$ and $\omega$ satisfying $\partial\bar{\partial}\omega^k = 0$ for $k=1,2$. If $\omega_\varphi= \omega +\sqrt{-1}\partial\bar{\partial}\varphi$ for a smooth potential function $\varphi$ on $X$, then for any $0<\alpha<1$, there exists $C$ depending only on $(X,\omega)$, $\|\R_\varphi\|_{C^0}$, and an upper bound for $\emph{Mab}_\omega(\omega_\varphi)$ such that $\|\varphi\|_{C^{3,\alpha}(X,\omega)}\le C.$\label{bounds}
\end{thm}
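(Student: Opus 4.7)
The scheme follows the Chen--Cheng program \cite{cc18} for cscK estimates, adapted to the Hermitian setting via the earlier estimates in \cite{shen19}. Set $F_\varphi := \log(\omega_\varphi^n/\Omega)$, so that the scalar curvature bound reads $\Delta_\varphi F_\varphi = -\R_\varphi$ with $\|\R_\varphi\|_{L^\infty}\le C$. The Mabuchi energy decomposes as an entropy piece plus the term $\int_X F\omega_0^n$, which is a constant depending only on $(X,\omega_0)$, so the upper bound on $\Mab_{\omega_0}(\omega_\varphi)$ immediately yields an entropy bound
\[
\int_X F_\varphi \,\omega_\varphi^n \;=\; \int_X \log\frac{\omega_\varphi^n}{\omega_0^n}\,\omega_\varphi^n \;\le\; C.
\]

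The first step is the $L^\infty$ bound on $\varphi$. Viewing $\omega_\varphi^n = e^{F_\varphi}\Omega$ as a complex Monge--Amp\`ere equation whose right-hand side satisfies an $L\log L$ bound coming from the entropy estimate, apply the Hermitian version of the Ko{\l}odziej-type $L^\infty$ bound (available in this setting from \cite{tw13} and used in \cite{shen19}), after normalizing by $\sup_X \varphi = 0$, to conclude $\|\varphi\|_{L^\infty}\le C$. With both $\|\varphi\|_{L^\infty}$ and $\|\Delta_\varphi F_\varphi\|_{L^\infty}$ in hand, the next step is to promote the entropy bound to an $L^\infty$ bound on $F_\varphi$. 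Following \cite{cc18}, test the equation for $F_\varphi$ against suitable powers $F_\varphi^{2p+1}$ with a $\varphi$-dependent weight and run a Moser iteration; the new feature in the Hermitian case is the appearance of torsion terms upon integration by parts, which are absorbed using the divergence formula Lemma \ref{div_thm} and the Gauduchon identity of Lemma \ref{torsion_vanishing}, together with the new integral estimate alluded to in the introduction.

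The main obstacle is the Laplacian estimate. The plan is to apply the maximum principle to a quantity of the form $\log \tr_\omega \omega_\varphi - A\varphi$ for a large constant $A$ depending on the bisectional curvature and torsion of $\omega_0$. A Chern--Lu-type computation produces terms proportional to $|T|^2$ and to $\nabla T$ that are absent in the K\"ahler case; these are the hardest to control and are handled by using $\partial\bar\partial\omega_0^k=0$ for $k=1,\dots,n-1$, the already-established $L^\infty$ bounds on $\varphi$ and $F_\varphi$, and the new estimate from \cite{shen19}. The output is a one-sided bound $\tr_\omega \omega_\varphi \le C$, and the symmetric bound $\tr_{\omega_\varphi}\omega \le C$ then follows from $\|F_\varphi\|_{L^\infty}\le C$ via the arithmetic--geometric mean inequality applied to the eigenvalues.

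With $C^{-1}\omega \le \omega_\varphi \le C\omega$, the Monge--Amp\`ere equation becomes uniformly elliptic; Evans--Krylov theory yields a $C^{2,\alpha}$ bound on $\varphi$, and standard Schauder bootstrapping on the linearization (viewing the scalar curvature equation as a fourth-order equation in $\varphi$ with bounded right-hand side) promotes this to bounds on $\|\varphi\|_{C^k(X,\omega)}$ for every $k$, with constants depending only on $(X,\omega)$, $\|\R_\varphi\|_{L^\infty}$, and the upper bound on $\Mab_{\omega_0}(\omega_\varphi)$.
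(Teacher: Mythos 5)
Your overall architecture (entropy bound from the Mabuchi energy, $L^\infty$ bound on $\varphi$, $L^\infty$ bound on $F_\varphi$, Laplacian estimate, then bootstrapping) matches the paper's, and the first steps are handled the same way: the paper likewise reduces the $C^0$, $C^1$ and $L^p$ estimates to \cite{shen19}, noting that those arguments never differentiate $\R_\varphi$. The problem is your plan for the Laplacian estimate, which is where the paper's only genuinely new work lives, and where your proposed route would fail. You suggest applying the maximum principle to $\log\tr_\omega\omega_\varphi - A\varphi$ via a Chern--Lu computation. But in the Chen--Cheng framework $F$ is an unknown controlled only through $\Delta_\varphi F = -\R_\varphi + \tr_{\omega_\varphi}\Ric(\omega) \in L^\infty$; the Chern--Lu/Aubin--Yau inequality for $\Delta_\varphi\tr_\omega\omega_\varphi$ unavoidably produces the term $g^{i\bar j}F_{i\bar j}$, for which there is no pointwise bound. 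This is precisely why Chen--Cheng, and the paper, couple the trace with the gradient of $F$ and estimate the single quantity $Q = e^{A(F)}|\partial F|^2_{\omega_\varphi} + N(\tr_\omega\omega_\varphi)^{B+1}$: the good term $|\tilde\nabla\bar{\tilde\nabla}F|^2_{\omega_\varphi}$ coming from the Bochner-type identity for $\Delta_\varphi|\partial F|^2_{\omega_\varphi}$ absorbs $g^{i\bar j}F_{i\bar j}$ after a Cauchy--Schwarz with weight $\delta \sim (\tr_\omega\omega_\varphi)^{B}$. A pointwise argument on $\log\tr_\omega\omega_\varphi - A\varphi$ alone has no such good term available and does not close.

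Moreover, even after forming $Q$, the estimate is not a maximum-principle argument but an integral one (Moser iteration on $Q^{2p+1}$), because $\Delta_\varphi Q$ contains $2\,\mathrm{Re}\bigl(\tilde g^{p\bar q}(\Delta_\varphi F)_p F_{\bar q}\bigr)$, i.e.\ a \emph{derivative} of $\R_\varphi$, which is only hypothesized bounded in $L^\infty$. The paper handles this by integrating by parts with the non-K\"ahler divergence theorem (Lemma \ref{div_thm}), which produces a new torsion integral $\int_X \mathrm{Re}\bigl(\tilde g^{p\bar q}e^{A(F)}F_{\bar q}Q^{2p}(\tr\tilde T)_p\bigr)\Delta_\varphi F\,\omega_\varphi^n$; controlling that integral by $C\int_X Q^{2p+1}(\tr_\omega\omega_\varphi)^{2n-2}\omega_\varphi^n$ is the new estimate advertised in the introduction. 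Your proposal locates the torsion difficulty in the $L^\infty$ bound on $F_\varphi$ instead, and never addresses how to differentiate $\R_\varphi$; as written, the second-order estimate is a genuine gap. (A minor further point: the final $C^{2,\alpha}$ step uses the Hermitian Evans--Krylov-type result of \cite{tssy} rather than the K\"ahler one, but that is cosmetic.)
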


\begin{proof}
The entropy quantity $\Ent(\omega,\omega_\varphi)$ used in \cite{shen19} is implied by a bound on the  Mabuchi energy $\Mab_\omega(\omega_\varphi)$ since
\begin{align*}
\Ent(\omega,\omega_\varphi) := \int_X \log\frac{\omega_\varphi^n}{\omega^n}\omega_\varphi^n &= \Mab_\omega(\omega_\varphi) - \int_X F\omega_\varphi^n + \int_X F\omega^n\\
&\le \Mab_\omega(\omega_\varphi) + \sup_X |F|\int_X \omega_\varphi^n + C\\
&\le \Mab_\omega(\omega_\varphi) + C,
\end{align*}
where the inequalities follow from the fact that $F$ is the Chern-Ricci potential of the fixed metric $\omega$. In what follows, we will pass the dependencies of the constants on $\Ent(\omega,\omega_\varphi)$ to $\Mab_\omega(\omega_\varphi)$.

The proof follows in almost exactly the same manner as in Chen-Cheng \cite{cc18} using what has already been proven in the non-K\"ahler setting in \cite{shen19}. We include a proof here to show how we account for the torsion terms that result from handling the derivative terms of $\R_\varphi$ in the $L^\infty$ estimate that are no longer assumed to vanish. We consider the coupled second order equations
 \begin{align}\begin{split}\label{coupled_eqns}
& \ \ F = \log\tfrac{\omega_\varphi^n}{\omega^n} \\
{\Delta_\varphi}F &= -{\R_\varphi} +\tr_{\omega_\varphi} \Ric(\omega).
\end{split} \end{align}
For the $C^0$, $C^1$ and $L^p$ estimates, lifting the assumption that $\R_\varphi=const.$ and noting the fact that there is never an instance where we need to differentiate $\R_\varphi$, the estimates work out exactly the same way as in \cite{shen19} where the new bounds will additionally depend on $\|{\R_\varphi}\|_{C^0}$. It is only in the proof of the $L^\infty$ estimate that we need to differentiate ${\R_\varphi}$ and here is where the proof will somewhat differ from the constant Chern scalar curvature case (see Section 4 of \cite{shen19}). Specifically, the term of the derivative of $\Delta_\varphi F$ arising from the computation of $\Delta_\varphi|\partial F|^2_{\omega_\varphi}$ must be handled using integration by parts, and the torsion integral arising from this integration by parts will need to be controlled. 
\begin{lemma}
Let $(\varphi, F)$ be a smooth solution to \eqref{coupled_eqns}. Then there exists a constant $C$ depending only on $(X,\omega), \|{\R_\varphi}\|_{C^0}$ and $\emph{Mab}_\omega(\omega_\varphi)$, such that
\begin{align*}
\max_X (\tr_\omega \omega_\varphi) + \max_X |\partial F|^2_{\omega_\varphi} \le C .
\end{align*}
\end{lemma}
\begin{proof}
For the purposes of simplifying notation, let $\tilde{\nabla}$, $\tilde{R}$ and $\tilde{T}$ denote, respectively, the Chern connection, curvature tensor and torsion with respect to $\tilde{g}$, the associated metric to $\omega_\varphi$. Commuting derivatives using the identities in \eqref{a_comm_formula} and \eqref{f_comm_formula}, we have the following:
\begin{align}\begin{split}\label{laplacian_of_grad_norm}
{\Delta_\varphi}(|\partial F|^2_{\omega_\varphi})&=\tilde{g}^{i\bar{j}}\tilde{g}^{p\bar{q}}\big((\tilde{\nabla}_p \tilde{\nabla}_i \tilde{\nabla}_{\bar{j}}F + \tilde{T}^r_{pi}\tilde{\nabla}_r F_{\bar{j}})F_{\bar{q}} +\overline{\tilde{T}^r_{qj}}\tilde{\nabla}_i F_{\bar{r}}F_p + \tilde{\nabla}_i\overline{\tilde{T}^r_{qj}}F_{\bar{r}}F_p\\
& \ \ \ \ + (\tilde{\nabla}_{\bar{q}}\tilde{\nabla}_i \tilde{\nabla}_{\bar{j}}F + \tilde{R}_{i\bar{q}\; \bar{j}}^{\; \;\; \bar{\ell}}F_{\bar{\ell}})F_p  \big)  +  |\tilde{\nabla}\tilde{\nabla} F|^2_{\omega_\varphi}+|\tilde{\nabla} \bar{\tilde{\nabla}} F|^2_{\omega_\varphi}\\
&=\tilde{g}^{p\bar{q}}\big({\Delta_\varphi} F)_p F_{\bar{q}} + 2\text{Re}(\tilde{g}^{i\bar{j}}\tilde{g}^{p\bar{q}}\tilde{T}^r_{pi}\tilde{\nabla}_r F_{\bar{j}}F_{\bar{q}}) + \tilde{g}^{i\bar{j}}\tilde{g}^{p\bar{q}}\tilde{\nabla}_i\overline{\tilde{T}^r_{qj}}F_{\bar{r}}F_p\\
& \ \ \ \ + \tilde{g}^{p\bar{q}}({\Delta_\varphi} F)_{\bar{q}}F_p   + \tilde{g}^{i\bar{j}}\tilde{g}^{p\bar{q}}\tilde{R}_{i\bar{q}\; \bar{j}}^{\;\;\;\bar{\ell}} F_{\bar{\ell}}F_p+ |\tilde{\nabla}\tilde{\nabla} F|^2_{\omega_\varphi}+|\tilde{\nabla} \bar{\tilde{\nabla}} F|^2_{\omega_\varphi}\\
&=\tilde{g}^{p\bar{q}}\big({\Delta_\varphi} F)_p F_{\bar{q}} + 2\text{Re}(\tilde{g}^{i\bar{j}}\tilde{g}^{p\bar{q}}\tilde{g}^{r\bar{k}}\tilde{T}_{pi\bar{k}}\tilde{\nabla}_r F_{\bar{j}}F_{\bar{q}})\\
 & \ \ \ \ +\tilde{g}^{i\bar{j}}\tilde{g}^{p\bar{q}}\tilde{g}^{t\bar{r}}\tilde{\nabla}_i(\overline{\tilde{T}_{qj\bar{t}}})F_{\bar{r}}F_p + \tilde{g}^{p\bar{q}}({\Delta_\varphi} F)_{\bar{q}}F_p+ \tilde{g}^{i\bar{j}}\tilde{g}^{p\bar{q}} \tilde{g}^{k\bar{\ell}}\tilde{R}_{i\bar{q}k\bar{j}} F_{\bar{\ell}}F_p\\
 & \ \ \ \ + |\tilde{\nabla}\tilde{\nabla} F|^2_{\omega_\varphi}+|\tilde{\nabla} \bar{\tilde{\nabla}} F|^2_{\omega_\varphi}\\
\end{split}
\end{align}
We now commute indices of the curvature tensor as in \ref{curv} and trace to get that
\begin{align}\begin{split}
{\Delta_\varphi}(|\partial F|^2_{\omega_\varphi})&=2\text{Re}(\tilde{g}^{p\bar{q}}\big({\Delta_\varphi} F)_p F_{\bar{q}}) + 2\text{Re}(\tilde{g}^{i\bar{j}}\tilde{g}^{p\bar{q}}\tilde{g}^{r\bar{k}}\tilde{T}_{pi\bar{k}}\tilde{\nabla}_r F_{\bar{j}}F_{\bar{q}})\\
& \ \ \ \ +\tilde{g}^{i\bar{j}}\tilde{g}^{p\bar{q}}\tilde{g}^{t\bar{r}}\tilde{\nabla}_i(\overline{\tilde{T}_{qj\bar{t}}})F_{\bar{r}}F_p  + \tilde{g}^{p\bar{q}} \tilde{g}^{k\bar{\ell}}\tilde{R}_{k\bar{q}}F_{\bar{\ell}}F_p\\
& \ \ \ \  -\tilde{g}^{p\bar{q}}\tilde{g}^{k\bar{\ell}}\tilde{g}^{r\bar{s}}\tilde{\nabla}_{\bar{q}}(\tilde{T}_{rk\bar{s}})F_{\bar{\ell}}F_p+|\tilde{\nabla}\tilde{\nabla} F|^2_{\omega_\varphi}+|\tilde{\nabla} \bar{\tilde{\nabla}} F|^2_{\omega_\varphi} .
\end{split}\end{align}
For a general real-valued function $A(F)$, 
\begin{align*}
e^{-A(F)}{\Delta_\varphi}(e^{A(F)} |\partial F|^2_{\omega_\varphi})&= {\Delta_\varphi}(|\partial F|^2_{\omega_\varphi}) +2A'\text{Re}(\tilde{g}^{i\bar{j}}\tilde{g}^{k\bar{\ell}}(F_iF_k F_{\bar{\ell}\bar{j}} +F_iF_{\bar{\ell}}F_{k\bar{j}}))\\
& \ \ \ \ + (A'^2+A'')|\partial F|^4_{\omega_\varphi} + A'{\Delta_\varphi} F |\partial F|^2_{\omega_\varphi} ,
\end{align*}
where $F_{\bar{\ell}\bar{j}}$ denotes $\tilde{\nabla}_{\bar{j}}\tilde{\nabla}_{\bar{\ell}}F$.
Substituting \eqref{laplacian_of_grad_norm} for the first term in the above equation and using the completed square:
\begin{align*}
A'^2|\partial F|^4_{\omega_\varphi} + 2A'\text{Re}(\tilde{g}^{i\bar{j}}\tilde{g}^{k\bar{\ell}}F_iF_kF_{\bar{\ell}\bar{j}})+|\tilde{\nabla}\tilde{\nabla} F|^2_{\omega_\varphi}\ge 0 ,
\end{align*}
we obtain that
\begin{align*}
e^{-A(F)}{\Delta_\varphi}(e^{A(F)} |\partial F|^2_{\omega_\varphi})&\ge 2\text{Re}(\tilde{g}^{p\bar{q}}({\Delta_\varphi} F)_p F_{\bar{q}}) +2\text{Re}(\tilde{g}^{i\bar{j}}\tilde{g}^{p\bar{q}}\tilde{g}^{r\bar{k}}\tilde{T}_{pi\bar{k}}\tilde{\nabla}_r F_{\bar{j}}F_{\bar{q}})\\
&\ \ \ \ +\tilde{g}^{i\bar{j}}\tilde{g}^{p\bar{q}}\tilde{g}^{t\bar{r}}\tilde{\nabla}_i(\overline{\tilde{T}_{qj\bar{t}}})F_{\bar{r}}F_p + \tilde{g}^{p\bar{q}} \tilde{g}^{k\bar{\ell}}\tilde{R}_{k\bar{q}}F_{\bar{\ell}}F_p\\
& \ \ \ \ -\tilde{g}^{p\bar{q}}\tilde{g}^{k\bar{\ell}}\tilde{g}^{r\bar{s}}\tilde{\nabla}_{\bar{q}}(\tilde{T}_{rk\bar{s}})F_{\bar{\ell}}F_p + |\tilde{\nabla} \bar{\tilde{\nabla}} F|^2_{\omega_\varphi} + 2A'\tilde{g}^{i\bar{j}}\tilde{g}^{k\bar{\ell}}F_iF_{\bar{\ell}}F_{k\bar{j}}\\
& \ \ \ \ +A''|\partial F|^4_{\omega_\varphi} +A'{\Delta_\varphi} F|\partial F|^2_{\omega_\varphi}.
\end{align*}
Applying the relation $\tilde{R}_{k\bar{q}} = R_{k\bar{q}} - F_{k\bar{q}}$ to switch the Ricci curvature of $\omega_\varphi$ to that of $\omega$,
\begin{align}\begin{split}\label{af_eqn}
e&^{-A(F)}{\Delta_\varphi}(e^{A(F)} |\partial F|^2_{\omega_\varphi})\\
&\ge 2\text{Re}(\tilde{g}^{p\bar{q}}({\Delta_\varphi} F)_p F_{\bar{q}}) +2\text{Re}(\tilde{g}^{i\bar{j}}\tilde{g}^{p\bar{q}}\tilde{g}^{r\bar{k}}\tilde{T}_{pi\bar{k}}\tilde{\nabla}_r F_{\bar{j}}F_{\bar{q}})+\tilde{g}^{i\bar{j}}\tilde{g}^{p\bar{q}}\tilde{g}^{t\bar{r}}\tilde{\nabla}_i(\overline{\tilde{T}_{qj\bar{t}}})F_{\bar{r}}F_p \\
& \ \ \ \  + \tilde{g}^{p\bar{q}} \tilde{g}^{k\bar{\ell}}R_{k\bar{q}}F_{\bar{\ell}}F_p -\tilde{g}^{p\bar{q}}\tilde{g}^{k\bar{\ell}}\tilde{g}^{r\bar{s}}\tilde{\nabla}_{\bar{q}}(\tilde{T}_{rk\bar{s}})F_{\bar{\ell}}F_p + |\tilde{\nabla} \bar{\tilde{\nabla}} F|^2_{\omega_\varphi}\\
& \ \ \ \ + (2A'-1)\tilde{g}^{i\bar{j}}\tilde{g}^{k\bar{\ell}}F_iF_{\bar{\ell}}F_{k\bar{j}} +A''|\partial F|^4_{\omega_\varphi} +A'{\Delta_\varphi} F|\partial F|^2_{\omega_\varphi}\\
&\ge 2\text{Re}(\tilde{g}^{p\bar{q}}({\Delta_\varphi} F)_p F_{\bar{q}}) +2\text{Re}(\tilde{g}^{i\bar{j}}\tilde{g}^{p\bar{q}}\tilde{g}^{r\bar{k}}\tilde{T}_{pi\bar{k}}\tilde{\nabla}_r F_{\bar{j}}F_{\bar{q}})+\tilde{g}^{i\bar{j}}\tilde{g}^{p\bar{q}}\tilde{g}^{t\bar{r}}\tilde{\nabla}_i(\overline{\tilde{T}_{qj\bar{t}}})F_{\bar{r}}F_p\\
& \ \ \ \ + \tilde{g}^{p\bar{q}} \tilde{g}^{k\bar{\ell}}R_{k\bar{q}}F_{\bar{\ell}}F_p -\tilde{g}^{p\bar{q}}\tilde{g}^{k\bar{\ell}}\tilde{g}^{r\bar{s}}\tilde{\nabla}_{\bar{q}}(\tilde{T}_{rk\bar{s}})F_{\bar{\ell}}F_p + (1-(A'-\tfrac{1}{2}))|\tilde{\nabla} \bar{\tilde{\nabla}} F|^2_{\omega_\varphi}\\
& \ \ \ \ +(A'' - (A'-\tfrac{1}{2}))|\partial F|^4_{\omega_\varphi} +A'{\Delta_\varphi} F|\partial F|^2_{\omega_\varphi}  ,
\end{split}\end{align}
where we used the following Cauchy-Schwarz inequality:
\begin{align*}
(2A'-1)\tilde{g}^{i\bar{j}}\tilde{g}^{k\bar{\ell}}F_iF_{\bar{\ell}}F_{k\bar{j}} &\ge -(A'-\tfrac{1}{2})|\partial F|^4_{\omega_\varphi} - (A'-\tfrac{1}{2})|\tilde{\nabla}\bar{\tilde{\nabla}} F|^2_{\omega_\varphi}
\end{align*}
for $A'>\tfrac{1}{2}$.

In order to control the bad torsion terms (the second, third and fifth terms in the last line of \eqref{af_eqn}), we will need to strategically choose our function $A(F)$ to ensure that $1-(A'-\tfrac{1}{2})>0$ and $A''-(A'-\tfrac{1}{2})>0$. We can accomplish this by choosing $$A(F)=\kappa e^{F} +F(\tfrac{1}{2}-\varepsilon),$$ so that $A'(F)=\kappa e^{F}+\tfrac{1}{2}-\varepsilon$ and $A''(F)= \kappa e^F$. We can then pick $\varepsilon, \kappa>0$ such that

\[ 0 \le A'' - \varepsilon = A' -\tfrac{1}{2} \le \tfrac{1}{2} \ \ \ \Leftrightarrow \ \ \ \begin{cases} 
	\kappa e^{\min_X F}-\varepsilon \ge 0\\
	\kappa e^{\max_X F}-\varepsilon\le \tfrac{1}{2}\ .
   \end{cases}
\]

We can first choose $\kappa$ small enough such that $\kappa e^{\max_X F}\le \tfrac{1}{2}$. Then choose $\varepsilon$ small enough such that $\kappa e^{\min F}\ge \varepsilon$. This ensures that $A' \in(\tfrac{1}{2},1)$.

It follows that
\begin{align*}
e&^{-A(F)}{\Delta_\varphi}(e^{A(F)}|\partial F|_{\omega_\varphi}^2)\\
& \ge 2\text{Re}\big(\tilde{g}^{p\bar{q}}\big({\Delta_\varphi} F)_p F_{\bar{q}}\big)+ 2\text{Re}(\tilde{g}^{i\bar{j}}\tilde{g}^{p\bar{q}}\tilde{g}^{r\bar{k}}\tilde{T}_{pi\bar{k}}\tilde{\nabla}_r F_{\bar{j}}F_{\bar{q}}) +\tilde{g}^{i\bar{j}}\tilde{g}^{p\bar{q}}\tilde{g}^{t\bar{r}}\tilde{\nabla}_i(\overline{\tilde{T}_{qj\bar{t}}})F_{\bar{r}}F_p \\
& \ \ \ \   + \tilde{g}^{p\bar{q}} \tilde{g}^{k\bar{\ell}}R_{k\bar{q}}F_p F_{\bar{\ell}} -\tilde{g}^{p\bar{q}}\tilde{g}^{k\bar{\ell}}\tilde{g}^{r\bar{s}}\tilde{\nabla}_{\bar{q}}(\tilde{T}_{rk\bar{s}})F_{\bar{\ell}}F_p  + \tfrac{1}{2}|\tilde{\nabla} \bar{\tilde{\nabla}} F|^2_{\omega_\varphi}+ \varepsilon |\partial F|_{\omega_\varphi}^4 + A'{\Delta_\varphi} F |\partial F|_{\omega_\varphi}^2 \\
& \ge 2\text{Re}\big(\tilde{g}^{p\bar{q}}\big({\Delta_\varphi} F)_p F_{\bar{q}}\big)+ 2\text{Re}(\tilde{g}^{i\bar{j}}\tilde{g}^{p\bar{q}}\tilde{g}^{r\bar{k}}T_{pi\bar{k}}\tilde{\nabla}_r F_{\bar{j}}F_{\bar{q}}) +\tilde{g}^{i\bar{j}}\tilde{g}^{p\bar{q}}\tilde{g}^{t\bar{r}}\partial_i \overline{T_{qj\bar{t}}}F_{\bar{r}}F_p  \\
& \ \ \ \ - \tilde{g}^{i\bar{j}}\tilde{g}^{p\bar{q}}\tilde{g}^{t\bar{r}}\tilde{g}^{s\bar{k}}\partial_i \tilde{g}_{t\bar{k}}\overline{T_{qj\bar{s}}}F_{\bar{r}}F_p+ \tilde{g}^{p\bar{q}} \tilde{g}^{k\bar{\ell}}R_{k\bar{q}}F_p F_{\bar{\ell}} -\tilde{g}^{p\bar{q}}\tilde{g}^{k\bar{\ell}}\tilde{g}^{r\bar{s}}\partial_{\bar{q}}(T_{rk\bar{s}})F_{\bar{\ell}}F_p \\
& \ \ \ \  +\tilde{g}^{p\bar{q}}\tilde{g}^{k\bar{\ell}}\tilde{g}^{r\bar{s}}\tilde{g}^{i\bar{j}}\partial_{\bar{q}} \tilde{g}_{i\bar{s}}T_{rk\bar{j}}F_{\bar{\ell}}F_p  + \tfrac{1}{2}|\tilde{\nabla}\bar{\tilde{\nabla}}F|^2_{\omega_\varphi} + \varepsilon |\partial F|^4_{\omega_\varphi} - |{\Delta_\varphi} F \|\partial F|^2_{\omega_\varphi} 
\end{align*}
where we converted the covariant derivatives to partial derivatives as in \eqref{cov_to_partial} and passed the torsion terms of $\tilde{g}$ to those of $g$ as in \eqref{torsion}.

Applying Young's inequality and choosing $B$ to be at least $3(n-1)$, where the factor of $n-1$ comes from the fact that $\tr_\omega \omega_\varphi \le C(\tr_{\omega_\varphi}\omega)^{n-1}$, we have
\begin{align*}
e&^{-A(F)}{\Delta_\varphi}(e^{A(F)}|\partial F|^2_{\omega_\varphi})\\
&\ge 2\text{Re}(\tilde{g}^{p\bar{q}}\big({\Delta_\varphi} F)_p F_{\bar{q}}) + 2\text{Re}(\tilde{g}^{i\bar{j}}\tilde{g}^{p\bar{q}}\tilde{g}^{r\bar{k}}T_{pi\bar{k}}\tilde{\nabla}_r F_{\bar{j}}F_{\bar{q}})\\
& \ \ \ \  +\tilde{g}^{i\bar{j}}\tilde{g}^{p\bar{q}}\tilde{g}^{t\bar{r}}\partial_i \overline{T_{qj\bar{t}}}F_{\bar{r}}F_p  - \tilde{g}^{i\bar{j}}\tilde{g}^{p\bar{q}}\tilde{g}^{t\bar{r}}\tilde{g}^{s\bar{k}}\partial_i \tilde{g}_{t\bar{k}}\overline{T_{qj\bar{s}}}F_{\bar{r}}F_p+ \tilde{g}^{p\bar{q}} \tilde{g}^{k\bar{\ell}}R_{k\bar{q}}F_p F_{\bar{\ell}} \\
& \ \ \ \  -\tilde{g}^{p\bar{q}}\tilde{g}^{k\bar{\ell}}\tilde{g}^{r\bar{s}}\partial_{\bar{q}}(T_{rk\bar{s}})F_{\bar{\ell}}F_p   +\tilde{g}^{p\bar{q}}\tilde{g}^{k\bar{\ell}}\tilde{g}^{r\bar{s}}\tilde{g}^{i\bar{j}}\partial_{\bar{q}} \tilde{g}_{i\bar{s}}T_{rk\bar{j}}F_{\bar{\ell}}F_p + \tfrac{1}{2}|\tilde{\nabla} \bar{\tilde{\nabla}} F|^2_{\omega_\varphi} \\
& \ \ \ \ + \varepsilon |\partial F|^4_{\omega_\varphi} - |{\Delta_\varphi} F \|\partial F|^2_{\omega_\varphi} \\
&\ge  2\text{Re}(\tilde{g}^{p\bar{q}}\big({\Delta_\varphi} F)_p F_{\bar{q}}) -C(\tr_\omega \omega_\varphi)^Bg^{i\bar{j}}\tilde{g}^{k\bar{\ell}}\tilde{g}^{p\bar{q}}\partial_i \tilde{g}_{k\bar{q}}\partial_{\bar{j}}\tilde{g}_{p\bar{\ell}} + \tfrac{1}{4}|\tilde{\nabla} \bar{\tilde{\nabla}} F|^2_{\omega_\varphi}\\
& \ \ \ \ - C(\tr_\omega\omega_\varphi)^B|\partial F|^2_{\omega_\varphi}  - C(\tr_\omega\omega_\varphi)^B,
\end{align*}
where the constant in front of the fourth term depends on $\|{\R_\varphi}\|_{C^0}$.
Now, we use the following computation in the proof of Equation (9.5) of \cite{tw15} for ${\Delta_\varphi}\tr_\omega\omega_\varphi$:
\begin{align*}
{\Delta_\varphi}\tr_\omega \omega_\varphi&=\tilde{g}^{p\bar{j}}\tilde{g}^{i\bar{q}}g^{k\bar{\ell}}\nabla_k\tilde{g}_{i\bar{j}}\nabla_{\bar{\ell}}\tilde{g}_{p\bar{q}}+2\text{Re}(\tilde{g}^{i\bar{j}}g^{k\bar{\ell}}T^p_{ki}\nabla_{\bar{\ell}}\tilde{g}_{p\bar{j}})+\tilde{g}^{i\bar{j}}g^{k\bar{\ell}}T^p_{ik}\overline{T^q_{j\ell}}\tilde{g}_{p\bar{q}}\\
& \ \ \ \ +g^{i\bar{j}}F_{i\bar{j}}-R +\tilde{g}^{i\bar{j}}\nabla_i \overline{T^\ell_{j\ell}}+\tilde{g}^{i\bar{j}}g^{k\bar{\ell}}\nabla_{\bar{\ell}}T^p_{ik}-\tilde{g}^{i\bar{j}}g^{k\bar{\ell}}\tilde{g}_{k\bar{q}}(\nabla_i\overline{T^q_{j\ell}}-R_{i\bar{\ell}p\bar{j}}g^{p\bar{q}})\\
& \ \ \ \ -\tilde{g}^{i\bar{j}}g^{k\bar{\ell}}T^p_{ik}\overline{T^q_{j\ell}}g_{p\bar{q}}.
\end{align*}
Converting the first term into covariant derivatives and applying Young's inequality, we have
\begin{align*}
\tilde{g}^{p\bar{j}}\tilde{g}^{i\bar{q}}g^{k\bar{\ell}}\nabla_k \tilde{g}_{i\bar{j}}\nabla_{\bar{\ell}}\tilde{g}_{p\bar{q}} \ge \tilde{g}^{p\bar{j}}\tilde{g}^{i\bar{q}}g^{k\bar{\ell}}\partial_k \tilde{g}_{i\bar{j}}\partial_{\bar{\ell}}\tilde{g}_{p\bar{q}}-\tfrac{\varepsilon}{2}\tilde{g}^{p\bar{j}}\tilde{g}^{i\bar{q}}g^{k\bar{\ell}}\partial_k \tilde{g}_{i\bar{j}}\partial_{\bar{\ell}}\tilde{g}_{p\bar{q}} - C(\tr_\omega \omega_\varphi)^{n} .
\end{align*}
Likewise, the second term can be bounded below by
\begin{align*}
2\text{Re}(\tilde{g}^{i\bar{j}}g^{k\bar{\ell}}T^p_{ki}\nabla_{\bar{\ell}}\tilde{g}_{p\bar{j}}) \ge -\tfrac{\varepsilon}{2}\tilde{g}^{p\bar{j}}\tilde{g}^{i\bar{q}}g^{k\bar{\ell}}\partial_k \tilde{g}_{i\bar{j}}\partial_{\bar{\ell}}\tilde{g}_{p\bar{q}} - C(\tr_\omega\omega_\varphi)^{n},
\end{align*}
and the fourth term by
\begin{align*}
g^{i\bar{j}}F_{i\bar{j}}&\ge - \tfrac{|\tilde{\nabla}\bar{\tilde{\nabla}}F|_{\omega_\varphi}^2}{\delta} - C\delta(\tr_\omega \omega_\varphi)^2.
\end{align*}
It is straightforward to see that the remaining terms can be bounded below by $-C(\tr_\omega\omega_\varphi)^{n}$.
Choosing $B\ge n$ and $\delta = 4e^{-A(F)}N(B+1)(\tr_\omega\omega_\varphi)^{B}$, we arrive at the following:
\begin{align*}
{\Delta_\varphi}\tr_\omega\omega_\varphi&\ge (1-\varepsilon)\tilde{g}^{p\bar{j}}\tilde{g}^{i\bar{q}}g^{k\bar{\ell}}\partial_k\tilde{g}_{i\bar{j}}\partial_{\bar{\ell}}\tilde{g}_{p\bar{q}} -\tfrac{e^{A(F)}}{4N(B+1)(\tr_\omega\omega_\varphi)^B}|\tilde{\nabla}\bar{\tilde{\nabla}}F|^2_{\omega_\varphi} -C(\tr_\omega\omega_\varphi)^{B+2}.
\end{align*}
Observe that

\begin{align}\begin{split}
{\Delta_\varphi} (\tr_\omega \omega_\varphi)^{B+1} &= (B+1)B(\tr_\omega \omega_\varphi)^{B-1}|\partial \tr_\omega \omega_\varphi|^2_{\omega_\varphi} + (B+1)(\tr_\omega \omega_\varphi)^{B}{\Delta_\varphi} \tr_\omega \omega_\varphi\\
&\ge (B+1)(\tr_\omega \omega_\varphi)^{B}{\Delta_\varphi} \tr_\omega \omega_\varphi .\label{q_bound}
\end{split}\end{align}

Choosing $N$ sufficiently large and letting $Q:= e^{A(F)}|\partial F|^2_{\omega_\varphi} + N(\tr_\omega \omega_\varphi)^{B+1}$, we arrive at
\begin{align*}
{\Delta_\varphi} Q &= {\Delta_\varphi}(e^{A(F)}|\partial F|^2_{\omega_\varphi} + N(\tr_\omega \omega_\varphi)^{B+1})\\
& \ge  2e^{A(F)}\text{Re}(\tilde{g}^{p\bar{q}}\big({\Delta_\varphi} F)_p F_{\bar{q}}) -C(\tr_\omega \omega_\varphi)^B |\partial F|^2_{\omega_\varphi} - C(\tr_\omega \omega_\varphi)^{2B+2}\\
& \ge  2e^{A(F)}\text{Re}(\tilde{g}^{p\bar{q}}\big({\Delta_\varphi} F)_p F_{\bar{q}}) -C(\tr_\omega \omega_\varphi)^{B+1} Q  .
\end{align*}
Now we demonstrate how to handle the non-vanishing first derivative of $\Delta_\varphi F$ in the first term on the right-hand side of the above inequality. We will use integration by parts on this term in order to bound it, this follows from the same argument used by Chen-Cheng in Section 3 of \cite{cc18}. Let us compute
\begin{align*}
{\Delta_\varphi} Q^{2p+1} = (2p+1)2p|\nabla Q|^2_{\omega_\varphi}Q^{2p-1} + (2p+1)Q^{2p}{\Delta_\varphi}Q.
\end{align*}
Integrating both sides and applying our bound from Equation \ref{q_bound}, we have that
\begin{align*}
\int_X 2p|\nabla Q|^2_{\omega_\varphi}Q^{2p-1}\omega_\varphi^n &= \int_X Q^{2p}(-{\Delta_\varphi}Q)\omega_\varphi^n\\
& \le - 2\int_X e^{A(F)}\text{Re}(\tilde{g}^{p\bar{q}}\big({\Delta_\varphi} F)_p F_{\bar{q}})Q^{2p}\omega_\varphi^n\\
& \ \ \ \ + C\int_X (\tr_\omega \omega_\varphi)^{B+1} Q^{2p+1} \omega_\varphi^n.
\end{align*}
The first integral on the right-hand side can be integrated by parts using Lemma \ref{div_thm} in the following way
\begin{align*}
- 2\int_X e^{A(F)}\text{Re}(\tilde{g}^{p\bar{q}}\big({\Delta_\varphi} F)_p F_{\bar{q}})Q^{2p}\omega_\varphi^n &= 2\int_X \text{Re}(\tilde{g}^{p\bar{q}}\nabla_p(e^{A(F)}F_{\bar{q}}Q^{2p})){\Delta_\varphi}F\omega_\varphi^n\\
& \ \ \ \ -2\int_X \text{Re}(\tilde{g}^{p\bar{q}}e^{A(F)}F_{\bar{q}}Q^{2p}(\tr \tilde{T})_{p}){\Delta_\varphi}F \omega_\varphi^n
\end{align*}
Now, the first term on the right-hand side of the above equation can be handled exactly as in Equations (3.14)-(3.18) of \cite{cc18}.
Let us demonstrate how we can handle the second integral with the torsion term.

Firstly, note that $(\tr \tilde{T})_p = \tilde{T}^s_{sp} = \tilde{g}^{s\bar{\ell}}T^p_{sp}g_{p\bar{\ell}}$ and that we can bound the $\Delta_\varphi F$ term by
\begin{align*}
|{\Delta_\varphi} F|\le \|{\R_\varphi}\|_{C^0} + \|\tr_{\omega_\varphi}\Ric\|_{C^0} \le C(1+(\tr_\omega \omega_\varphi)^{n-1}).
\end{align*}

 Computing, the integral involving torsion can be controlled as follows:
\begin{align*}
-2\int_X \text{Re}(\tilde{g}^{p\bar{q}}e^{A(F)}&F_{\bar{q}}Q^{2p}(\tr \tilde{T})_{p}){\Delta_\varphi}F \omega_\varphi^n = -2\int_X \text{Re}(\tilde{g}^{p\bar{q}}e^{A(F)}F_{\bar{q}}Q^{2p}\tilde{g}^{s\bar{\ell}}T^r_{sp}g_{r\bar{\ell}}){\Delta_\varphi}F \omega_\varphi^n\\
&\le 2\int_X e^{A(F)}Q^{2p}(\tr_{\omega_\varphi}\omega)|\tilde{\nabla} F|_{\omega_\varphi}|T^r_{sp}|_{\omega_\varphi} |{\Delta_\varphi}F| \omega_\varphi^n\\
&\le \int_X e^{2A(F)}Q^{2p}|\tilde{\nabla}F|^2_{\omega_\varphi}({\Delta_\varphi}F)^2\omega_\varphi^n + \int_X Q^{2p}(\tr_{\omega_\varphi}\omega)^2|T^r_{sp}|^2_{\omega_\varphi}\omega_\varphi^n\\
&\le \int_X e^{A(F)}Q^{2p+1}({\Delta_\varphi}F)^2\omega_\varphi^n + \int_X Q^{2p}(\tr_{\omega_\varphi}\omega)^3|T^r_{sp}|^2_{\omega}\omega_\varphi^n\\
&\le C\int_X Q^{2p+1}(\tr_\omega \omega_\varphi)^{2n-2} \omega_\varphi^n + C\int_X Q^{2p+1} \omega_\varphi^n\\
&\le C\int_X Q^{2p+1}(\tr_\omega \omega_\varphi)^{2n-2} \omega_\varphi^n,
\end{align*}
where we used Young's inequality in the third line.
Now, our bound on the torsion integral is a constant multiple of the bound of the remaining terms as seen in Equation (3.20) of \cite{cc18}. The remainder of the proof for the $L^\infty$ bound on $Q$ follows from Moser iteration and several applications of the H\"older and Sobolev inequalities with respect to the reference metric $\omega$, see Section 3 of \cite{cc18} and Section 4 of \cite{cc17}. The constants and powers of the trace differ slightly from those in the K\"ahler case, but will not affect the iteration method. Lastly, to show an $L^1$ bound on the quantity $Q$ follows immediately from the fact that $|\partial F|^2_{\omega_\varphi}$ holds the same way as in (4.35) of \cite{cc17} and the $L^1$ bound on $(\tr_\omega \omega_\varphi)^{B+1}$ follows using the $L^{B+1}$ norm we obtain from the $L^p$ bound.
\end{proof}

Finally, using that we have an upper and lower bound on $\tr_\omega{\omega_\varphi}$ gives us the quasi-isometry of $\omega$ and ${\omega}_\varphi$. Now, going back to the coupled equations
\begin{align}
&F =\log \frac{\omega_\varphi^n}{\omega^n}\label{1} \\ 
\Delta_\varphi F = &-\R_\varphi +\tr_{\omega_\varphi}\Ric(\omega), \label{2}
\end{align}
by ellipticity and that the right-hand side of  \eqref{2} is bounded in $L^p$ for any $p>0$, we obtain a $W^{2,p}$ bound on $F$ for any $p>0$ (see Theorem 9.11 in \cite{gt}). By Morrey's inequality, this gives us $C^{1,\alpha}$ bounds on $F$ for any $\alpha\in(0,1)$. Using a result in the non-K\"ahler setting by \cite{tssy} and the fact that we have $C^{\alpha}$ bounds on $F$ and $\tr_\omega \omega_\varphi$, we can then obtain a $C^{2,\alpha}$ estimate on $\varphi$ for any $\alpha\in(0,1)$.  Differentiating \eqref{1}, we have that all the coefficients are bounded in $C^\alpha$ for any $\alpha\in(0,1)$, hence by Schauder estimates, we obtain $C^{3,\alpha}$ bounds on $\varphi$ for any $\alpha\in(0,1)$. This completes the proof of the theorem.
\end{proof}
From this, the result of Chen-He for the Calabi flow (\cite{chen-he08}, Theorem 3.2 and Theorem 3.3) which states that a $C^{3,\alpha}$ bound on $\varphi$ implies bounds on all higher order derivatives of $\varphi$ along the flow, can be applied here. The flow under consideration can be represented as
\begin{align*}
\frac{\partial}{\partial t}\varphi = - A(\nabla\varphi,\nabla^2\varphi)\varphi + f(\nabla\varphi, \nabla^2\varphi, \nabla^3\varphi),
\end{align*}
where $A$ is a strictly elliptic fourth-order operator with coefficients depending on first- and second-order derivatives of $\varphi$. The parabolic PDE theory used by Chen-He \cite{chen-he08} does not rely on any K\"ahler assumptions and holds in this setting as well. It follows that
\begin{lemma}
If the evolving metric $\omega_\varphi = \omega_0 + \sqrt{-1}\partial\bar\partial\varphi$ satisfies $||\varphi||_{C^{3,\alpha}(X,\omega_0)}\le C$ then in fact $||\varphi||_{C^{k,\alpha}(X,\omega_0)}\le C(k)$ for all $k>3$.
\end{lemma}

We now prove the main result of the paper:
\begin{thm}
Let $(X,\omega_0)$ be a Hermitian manifold with $c_1^{BC}(X)=0$ and $\partial\bar{\partial}\omega_0^k=0$ for $k=1,2$. A solution to the flow defined in Equation \eqref{chern-calabi_flow} exists as long as the Chern scalar curvature of the evolving metric remains bounded. In addition, if the Chern scalar curvature remains bounded for all time, then the flow converges smoothly to the unique Chern-Ricci-flat metric in the $\partial\bar{\partial}$-class of $\omega_0$. 
\end{thm}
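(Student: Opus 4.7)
For long-time existence, I would argue by maximality. Suppose the maximal existence time $T_{\max}$ is finite, with $\sup_{[0,T_{\max})}\|\R_{\varphi(t)}\|_0$ finite. The earlier gradient flow computation shows that $\Mab_{\omega_0}(\omega_{\varphi(t)})$ is monotonically decreasing, so it is bounded above by $\Mab_{\omega_0}(\omega_0)$ uniformly on $[0,T_{\max})$. Combined with the Chern scalar curvature bound, Theorem \ref{bounds} yields uniform $C^k$ estimates on $\varphi(t)$ for every $k$. These estimates allow $\varphi(t)$ to extend smoothly to $t = T_{\max}$, and short-time existence starting from that smooth data extends the solution past $T_{\max}$, contradicting maximality.

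For the convergence assertion, assume $\|\R_\varphi\|_0$ is uniformly bounded on $[0,\infty)$. Theorem \ref{bounds} then provides uniform $C^k$ bounds on $\varphi(t)$ throughout $[0,\infty)$, and $\Mab_{\omega_0}(\omega_{\varphi(t)})$ is uniformly bounded (above by monotonicity, below by the $C^k$ bounds on $\varphi$). From the gradient flow identity
\begin{align*}
\frac{d}{dt}\Mab_{\omega_0}(\omega_\varphi) = -\int_X \dot{\varphi}^2\,\omega_\varphi^n,
\end{align*}
integration in $t$ yields $\int_0^\infty \|\dot{\varphi}\|_{L^2(\omega_\varphi)}^2\,dt < \infty$, so one can select $t_k \to \infty$ with $\|\dot{\varphi}(t_k)\|_{L^2(\omega_{\varphi(t_k)})} \to 0$. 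The uniform $C^k$ bounds allow extraction, via Arzel\`a--Ascoli, of a subsequence $\varphi(t_{k_j})$ converging smoothly to a potential $\varphi_\infty$. Smooth convergence of the potentials forces smooth convergence of $\dot{\varphi}(t_{k_j})$, a smooth fourth-order expression in $\varphi$, to the analogous expression evaluated at $\varphi_\infty$; combined with the $L^2$-decay, this forces the limiting expression to vanish identically. Hence $\omega_\infty := \omega_0 + \sqrt{-1}\partial\bar{\partial}\varphi_\infty$ is a fixed point of the flow, which by Lemma \ref{fixed_pts} makes it Chern-Ricci-flat.

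To upgrade this to smooth convergence of the full flow, I would invoke uniqueness of a Chern-Ricci-flat metric in the $\partial\bar{\partial}$-class of $\omega_0$, which holds under the assumptions $c_1^{\BC}(X)=0$ and $\partial\bar{\partial}\omega_0^k = 0$ for $k=1,2$ by the non-K\"ahler Calabi-Yau theorem of Tosatti--Weinkove \cite{tw13} (together with the normalization $\int_X \varphi\,\omega_\varphi^n = 0$). Since every subsequential smooth limit of $\varphi(t)$ is a Chern-Ricci-flat potential in the same class, it must coincide with the normalized $\varphi_\infty$, and the standard argument that any subsequence contains a further subsequence converging to the same limit then upgrades subsequential smooth convergence to smooth convergence of the entire flow. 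The main obstacle in this program is the identification step: one must verify that the non-K\"ahler torsion contributions appearing in $\dot{\varphi}$ do not obstruct passing from $L^2$-decay to the fixed-point condition, but this is automatic because Lemma \ref{fixed_pts} characterizes the vanishing of that entire expression (including the torsion correction) as equivalent to Chern-Ricci-flatness.
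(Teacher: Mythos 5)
Your long-time existence argument and the overall architecture of the convergence proof match the paper, but there is a genuine gap in the convergence step. Setting $f(t)=\int_X\dot{\varphi}^2\,\omega_\varphi^n$, you correctly obtain $\int_0^\infty f(t)\,dt<\infty$ from the monotonicity and boundedness of the Mabuchi energy, and from this you select a sequence $t_k\to\infty$ with $f(t_k)\to 0$; this does produce \emph{one} subsequential limit that is a fixed point, hence Chern-Ricci-flat by Lemma \ref{fixed_pts}. However, your final upgrade to convergence of the entire flow rests on the assertion that \emph{every} subsequential smooth limit of $\varphi(t)$ is Chern-Ricci-flat. For an arbitrary sequence $s_n\to\infty$ along which $\varphi(s_n)$ converges smoothly, the limit is a fixed point only if $f(s_n)\to 0$, and integrability of $f$ alone does not give $f(t)\to 0$ as $t\to\infty$ --- a nonnegative integrable function can have spikes of fixed height along a sparse sequence of times. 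So as written, the uniqueness of the Chern-Ricci-flat metric cannot be applied to those other subsequential limits, and the "subsequence of a subsequence" argument does not close.

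The paper fills exactly this hole: using the uniform $C^\infty$ estimates it derives the differential inequality
\begin{align*}
f'(t)=\frac{\partial}{\partial t}\int_X \dot{\varphi}^2\,\omega_\varphi^n=\int_X 2\dot{\varphi}\ddot{\varphi}\,\omega_\varphi^n+\int_X \dot{\varphi}^2\Delta_\varphi\dot{\varphi}\,\omega_\varphi^n\le C f(t)^{1/2},
\end{align*}
which controls how fast $f$ can grow: $f(t')\le\bigl(f(t)^{1/2}+C(t'-t)\bigr)^2$ for $t'>t$. If $f(t_i)\ge\varepsilon$ along some $t_i\to\infty$, this forces $f\ge\bigl(\varepsilon^{1/2}-C(t_i-t)\bigr)^2$ on an interval just before $t_i$, so $\int_{t_i-1}^{t_i}f\ge c\,\varepsilon^{3/2}$, contradicting $\int_0^\infty f<\infty$. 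Hence $f(t)\to 0$ along the full flow, every subsequential limit is a fixed point, and only then does the uniqueness of the Chern-Ricci-flat metric in the $\partial\bar{\partial}$-class (the paper cites \cite{tw09}) yield smooth convergence without passing to a subsequence. You should add this growth-control step (or some equivalent uniform continuity statement for $f$, which again requires the a priori estimates) to make your argument complete; the rest of your proposal is sound.
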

\begin{proof}
Firstly, we know that along the flow, the Mabuchi energy of $\omega_\varphi$ with respect to $\omega_0$ is decreasing, in other words,
\begin{align*}
\frac{\partial}{\partial t}\Mab_{\omega_0}(\omega_\varphi)= -\int_X \Big( {\R_\varphi} + 2\Ree(g_\varphi^{j\bar{k}}(\tr T_\varphi)_{j}\partial_{\bar{k}} \log\frac{\omega_\varphi^n}{e^F\omega_0^n})\Big)^2\omega_\varphi^n\le 0. 
\end{align*}
In addition, the Mabuchi energy is bounded from below since
\begin{align*}
\Mab_{\omega_0}(\omega_\varphi) &= \int_X \log\frac{\omega_\varphi^n}{\omega_0^n} \omega_\varphi^n - \int_X F (\omega_\varphi^n-\omega_0^n)\\
&\ge -C - \sup_X|F| \int_X \omega_\varphi^n - C\\
&\ge -C
\end{align*}
where the first integral is bounded using the fact that the map $x\mapsto x\log x$ has a lower bound for $x>0$.
This gives us a uniform bound on the Mabuchi energy along the flow. 

Using the assumption that $\R_\varphi$ remains uniformly bounded for all time $t\in [0,\infty)$, it follows by Theorem \ref{bounds} that we have uniform $C^{3,\alpha}$ estimates on $\varphi$ for all time. By the smoothing argument, this gives us bounds on all higher order derivatives of $\varphi$. Combining the fact that we have short-time existence of the flow with these uniform bounds on the potential along the flow gives us long-time existence. Additionally, using these $C^\infty$ estimates on $\omega_\varphi$, we have that
\begin{align*}
\frac{\partial}{\partial t} \int_X \dot{\varphi}^2 \omega_\varphi^n &= \int_X 2\dot{\varphi}\ddot{\varphi}\omega_\varphi^n + \int_X \dot{\varphi}^2\Delta_\varphi \dot{\varphi}\omega_\varphi^n \le C \Big(\int_X \dot{\varphi}^2 \omega_\varphi^n \Big)^{1/2}.
\end{align*}
Let us denote $f(t) = \Big(\int_X \dot{\varphi}^2 \omega_\varphi \Big)(t)$. The above differential inequality is equivalent to
\begin{align*}
f'\le Cf^{1/2}\le C(f+\delta)^{1/2} \ \Rightarrow \ \frac{f'}{(f+\delta)^{1/2}} = ((f+\delta)^{1/2})'\le C,
\end{align*}
for a constant $\delta>0$.
For $t'>t$, integrating the above equation from $t$ to $t'$ gives us that
\begin{align*}
(f(t')+\delta)^{1/2}- (f(t)+\delta)^{1/2} \le C(t'-t).
\end{align*}
Since this holds for every $\delta>0$, we may take $\delta\rightarrow 0$ and rearrange to obtain 
\begin{align*}
f(t') \le (f(t)^{1/2}+C(t'-t))^{2},
\end{align*}
which shows that $f(t)$ can grow at most quadratically.
Since we know that the Mabuchi energy is uniformly bounded and $\frac{\partial}{\partial t} \Mab_{\omega_0}(\omega_\varphi) = - f(t)$, integration gives us that
\begin{align*}
\int_0^\infty f(t) dt < \infty.
\end{align*}
Let us assume for contradiction that $f(t)$ does not converge to $0$ for all $t$. Then, there exists an $\varepsilon\in(0,1)$ and a sequence of times $(t_i)_{i=1}^\infty\rightarrow\infty$ such that $f(t_i) \ge \varepsilon$. For each interval $[t_i-1,t_i)$,
\begin{align*}
\int_{t_i-1}^{t_i} f(t)dt &\ge \int_{\max(t_i-1, \{t \  : \ \varepsilon^{1/2}-C(t_i-t)>0\})}^{t_i} (\varepsilon^{1/2}  - C(t_i-t))^2 dt\\
&= \int_{\max(t_i-1, t_i-\frac{\varepsilon^{1/2}}{C})}^{t_i} (\varepsilon^{1/2}  - C(t_i-t))^2 dt\\
&= \frac{1}{3C}\big(\varepsilon^{3/2} - \max(0, (\varepsilon^{1/2}-C)^3)\big)\\
&= \frac{1}{3C}\min(\varepsilon^{3/2}, 3C\varepsilon + 3C\varepsilon^{1/2}+C^3)\\
&\ge C\varepsilon^{3/2}.
\end{align*}
However, since there are infinitely many such $t_i$ where $t_i\rightarrow\infty$, the integral of $f(t)$ cannot be bounded, giving us the desired contradiction.

This implies that $f(t) = \int_X \dot{\varphi}^2 \omega_\varphi^n \rightarrow 0$ for all $t\rightarrow\infty$ which implies that $\dot{\varphi}(t)\rightarrow 0$ as $t\rightarrow\infty$. Since we have $C^\infty$ estimates on $\varphi$, by the Arzela-Ascoli theorem, there exists a sequence of times $(t_j)_{j=1}^\infty$ such that $\varphi(t_j)\rightarrow \varphi_\infty$ in $C^\infty$, where $\varphi_\infty$ is smooth. In fact, since $\int_X \dot{\varphi}_\infty^2\omega_{\varphi_\infty} = 0$, $\dot{\varphi}_\infty = 0$ which by Lemma \ref{fixed_pts} implies that $\omega_\infty  = \omega_0 + \sqrt{-1}\partial\bar{\partial}\varphi_\infty$ is Chern-Ricci-flat. By the uniqueness of Chern-Ricci-flat metrics in the $\partial\bar{\partial}-$class of a metric \cite{tw09}, $\omega_\infty$ is the unique Chern-Ricci-flat metric along the flow. We can show that we indeed have $C^\infty$ convergence without passing to a sequence. To see this, suppose not. Then there exists a $k\in\mathbb{N}$, $\varepsilon>0$ and a sequence of times $(t_n)_{n=1}^\infty$ such that for all $n$,
\begin{align*}
\|\varphi(t_n) - \varphi_\infty\|_{C^k(X)} \ge \varepsilon.
\end{align*}
Since we have $C^{k+1}$ bounds on $\varphi(t_n)$, the Arzela-Ascoli theorem gives us that there exists a subsequence $(t_{n_j})_{j=1}^\infty$ such that $\varphi(t_{n_j})$ converges in $C^k$ to a limit $\varphi_\infty'$, with
\begin{align*}
\|\varphi_\infty' - \varphi_\infty\|_{C^k(X)}\ge \varepsilon.
\end{align*}
This implies that $\omega_\varphi '\neq \omega_\varphi$, but by the above argument $\omega_\varphi'$ is also Chern-Ricci-flat, and so this contradicts the uniqueness of the Chern-Ricci-flat metric in the $\partial\bar{\partial}-$class of $\omega_0$. This completes the proof of the theorem.

\end{proof}
\section*{Acknowledgements}
The author is very grateful to her thesis advisor Ben Weinkove for his helpful suggestions and his continued support and encouragement. She would also like to thank Gregory Edwards, Antoine Song and Jonathan Zhu for some useful discussions, as well as the referee for many helpful constructive comments. 

\bibliography{bib_for_all}
\end{document}